\numberwithin{equation}{section}
\newtheorem{Theorem}{Theorem}[section]
\newtheorem{Lemma}[Theorem]{Lemma}
\newtheorem{Corollary}[Theorem]{Corollary}
\newtheorem{Proposition}[Theorem]{Proposition}
\theoremstyle{definition}
\newtheorem{Definition}[Theorem]{Definition}
\newtheorem{Example}[Theorem]{Example}
\def\deg{\operatorname{degree}}
\def\height{\operatorname{height}}
\def\depth{\operatorname{depth}}
\def\sdepth{\operatorname{s-depth}}
\def\C{{\mathfrak C}}
\def\e{{\mathfrak e}}
\begin{document}

\title[Depths and Cohen-Macaulay Properties of Path Ideals]{Depths and Cohen-Macaulay Properties of Path Ideals}

\author{Daniel Campos}
\address{Department of Mathematics \\
Texas State University\\
601 University Drive\\ 
San Marcos, TX 78666.}
\email{campos.daniell@gmail.com}
\thanks{All authors were partially supported by NSF (DMS 1005206)}

\author{Ryan Gunderson}
\address{Department of Mathematics \\
University of Nebraska\\
Lincoln, NE 68588}
\email{ryan.gunderson@yahoo.com}

\author{Susan Morey}
\address{Department of Mathematics \\
Texas State University\\
601 University Drive\\ 
San Marcos, TX 78666.}
\email{morey@txstate.edu}

\author{Chelsey Paulsen}
\address{Department of Mathematics \\
North Dakota State University
Fargo, ND 58108}
\email{chelsey.paulsen@gmail.com}

\author{Thomas Polstra}
\address{Department of Mathematics\\
202 Mathematical Sciences Bldg.\\
University of Missouri\\
Columbia, MO 65211}
\email{thomaspolstra@gmail.com}

\keywords{Edge ideal, depth, path ideal, Cohen-Macaulay, monomial ideal, K\"{o}nig.} 
\subjclass[2010]{Primary 05E40, 13C14, 13F55; Secondary 13A15, 05C25, 05C65, 05C05.} 

\begin{abstract} Given a tree $T$ on $n$ vertices, there is an associated ideal $I$ of $R[x_1, \ldots, x_n]$ generated by all paths of a fixed length $\ell$ of $T$. We show that such an ideal always satisfies the K\"{o}nig property and classify all trees for which $R/I$ is Cohen-Macaulay. More generally, we show that an ideal $I$ whose generators correspond to any collection of subtrees of $T$ satisfies the K\"{o}nig property. Since the edge ideal of a simplicial tree has this form, this generalizes a result of Faridi. Moreover, every square-free monomial ideal can be represented (non-uniquely) as a subtree ideal of a graph, so this construction provides a new combinatorial tool for studying square-free monomial ideals. For a special class of trees, namely trees that are themselves a path of length at least $l$, a precise formula for the depth is given and it is shown that the proof extends to provide a lower bound on the Stanley depth of these ideals. Combining these results gives a new class of ideals for which the Stanley Conjecture holds.  
\end{abstract}

\maketitle

\section{Introduction}

There is a well-known correspondence between square-free monomial ideals generated in degree two and graphs. If $G$ is a graph on $n$ vertices, let $R=k[x_1, \ldots , x_n]$ be a polynomial ring over a field $k$ in $n$ variables and define the {\em edge ideal} $I=I(G)$ to be the ideal generated by all monomials of the form $x_ix_j$ where $\{x_i, x_j\}$ is an edge of $G$. Introduced by Villarreal in 1990 \cite{VillaCMGraphs}, the use of graphs to study algebraic properties of edge ideals has proven quite fruitful. Various authors have used combinatorial information from the associated graph to deduce information about associated primes \cite{AJ, MMV, SVV}, depths \cite{HH, morey}, regularity \cite{kummini}, and other invariants. This correspondence has been extended to arbitrary square-free monomial ideals by using clutters \cite{clutters}, hypergraphs \cite{Ha-VanTuyl}, or facets of simplicial complexes  \cite{FaridiSimplicial}.

In this paper, we introduce a new way to represent the combinatorial properties of a square-free monomial ideal. Given any square-free monomial ideal $I$, there is a graph $G$ such that the generators of $I$ correspond to the vertices of subtrees of $G$. The graph $G$ is far from unique, yet this approach yields a surprising amount of information about the ideal. The case where $G$ can be chosen to be a tree is of particular interest. The class of ideals whose generators are subtrees of a tree is a broad class encompassing several previously studied classes of graphs. For example, if $T$ is a tree and $\ell$ is a positive integer, the ideal whose generators correspond to paths of $T$ of length $\ell$ is called a {\em path ideal} of $T$ and is denoted by $I_{\ell}(T)$.  If $T$ is a directed or rooted tree, one can similarly form a path ideal where the generators of $I$ correspond to directed paths of $T$. Path ideals were introduced in \cite{ConcaDeNegri} and have been studied by various authors. Since a path is a special type of subtree, path ideals, of directed and of undirected trees, fall into this new structure. Edge ideals of simplicial trees fall into this class as well, as will be seen in Theorem~\ref{Simplicial Trees are Subtree Clutters}.

The primary goal of this paper will be to examine depth properties of path ideals of trees. Depth properties of edge ideals of trees are reasonably well understood (see \cite{morey}) and so this is a natural extension. While many authors have focused on path ideals of directed trees, the trees in this paper will be undirected. Such ideals can be more complicated due to the larger number of generators, however, using undirected paths yields a particularly nice classification of when the depth is maximal, that is, when the ideal is Cohen-Macaulay. 

As a consequence of the method of proof employed, we are able to extend the information found regarding the depth of the path ideal of a special type of tree, namely a tree that is itself a path, to a lower bound on the Stanley depth of such trees. For an introduction to Stanley depths, see \cite{WhatIsStanleyDepth}. Let $I$ be a monomial ideal. A {\em Stanley decomposition} of $R/I$ is a direct sum decomposition $R/I = \oplus_{i=1}^s m_iR_{t_i}$ where $m_i$ is a monomial and $R_{t_i}=k[x_{i_1}, \ldots ,x_{i_{t_i}}]$ is a polynomial subring of $R$ generated over $k$ by $t_i$ of the variables of $R$. The depth of this decomposition is the minimum of the $t_i$, that is, the smallest number of variables used in any summand. The {\em Stanley depth}, denoted $s$-depth, of $R/I$ is then the maximum depth of a Stanley decomposition of $R/I$.  Introduced in \cite{StanleyDepth}, this is a more geometric invariant attached to a monomial ideal, or more generally to a ${\mathbb Z}^r$ graded module. Stanley conjectured that the Stanley depth is always bounded below by the depth. By combining the bound found in Theorem~\ref{StanleyDepths} with Theorem~\ref{Depth of Spines}, we prove that one class of path ideals is Stanley, that is, the Stanley Conjecture holds true for this class of ideals. While other classes of Stanley ideals are known, see for instance \cite{StanleyDepthTree} or \cite{Cimpoeas}, the conjecture is still largely open.

The contents of the paper are as follows. In Section~\ref{Defns} we provide the definitions and basic facts used throughout paper. In Section~\ref{CMPathIdeals}, we introduce subtree ideals. In Theorem~\ref{Trees are Konig} we show that every subtree ideal of a tree satisfies the K\"{o}nig property. Theorem~\ref{Simplicial Trees are Subtree Clutters} shows that the ideal of a simplicial tree is a subtree ideal, and Example~\ref{Tree not Simplicial} shows that the converse does not hold. Thus Theorem~\ref{Trees are Konig} extends \cite[Theorem 5.3]{FaridiCM} to a larger class of ideals.  Using this, we classify all Cohen-Macaulay path ideals of trees. That is, we show in Theorem~\ref{CM iff Suspension} that a path ideal of length $\ell$ of a tree $T$ is Cohen-Macaulay if and only if $T$ is a suspension of length $\ell$ of another tree. This generalizes \cite[Theorem 2.4]{VillaCMGraphs} from edge ideals to path ideals. 

In Section~\ref{Depths and Stanley} we focus on the particular case where the tree $T$ is a path. An exact formula for the depths of the path ideals is computed in Theorem~\ref{Depth of Spines}. This formula, together with the Auslander-Buchsbaum formula, recovers the projective dimesions found in \cite[Theorem 4.1]{HeVanTuyl} and recovered in \cite[Corollary 5.1]{BouchatHaO'Keefe}. However, the method of proof involves applying the Depth Lemma repeatedly to a series of short exact sequences. This method of proof was shown to extend to provide a bound on the Stanley depth of the ideal in \cite{StanleyDepthTree}. Such a bound is given in Theorem~\ref{StanleyDepths} and as a result, in Corollary~\ref{StanleyIdeals} these ideals are seen to be Stanley, that is, the Stanley Conjecture is satisfied for this class of ideals.

\section{Definitions and Background}\label{Defns}

We begin by reviewing some standard notation and terminology regarding graphs and simplicial complexes and their connections to algebra. Note that by abuse of notation, $x_i$ will be used to denote both the vertex of a graph $G$ and the corresponding variable of the polynomial ring $R$. For additional information regarding monomial ideals, see \cite{monalg} and for additional background in graph theory, see \cite{Har}.

A {\em graph} is a vertex set $V=\{x_1, \ldots ,x_n\}$ together with a set $E=E(G) \subseteq V \times V$ of edges. As previously stated, associated to any graph $G$ is a square-free monomial ideal generated in degree two, $I=I(G)$ called the edge ideal of $I$. To generalize this correspondence to monomial ideals with generators of degree greater than two, researchers have used simple hypergraphs, clutters, or simplicial complexes. A \emph{clutter} $\mathfrak{C}$ is a vertex set $V$ together with a family of subsets of $V$, called edges, none of which are included in one another.  As with edge ideals of graphs, there is a one-to-one correspondence between clutters and square-free monomial ideals. If $\mathfrak{C}$ is a clutter, then $I(\mathfrak{C})$ is the ideal whose generators are the products of the vertices in each edge of $\mathfrak{C}$. Throughout the paper, we will sometimes abuse notation by using the edge $e$ of a clutter $\C$ interchangeably with the generator $x_e=\prod_{ x_j \in e} x_j$ of $I(\C)$. We will also write $e_1 \cap e_2$ to mean the intersection of the supports of the edges, that is, the set of vertices that appear in both edges.

Some basic notions from graph theory will be used throughout the paper and so are presented here for completeness.  
If $V' \subset V$ is a subset of the vertices of a graph $G$, the {\em induced subgraph} on $V'$ is the graph $G'$ given by $V(G')=V'$ and $E(G')=\{ e \in E \, | \, e \subset V'\}$. That is, the edges of $G'$ are precisely the edges of $G$ with both endpoints in $V'$. The induced subclutter of a clutter is defined similarly. If $x \in V(G)$, the {\em neighbor set} $N(x)$ is the set of all vertices that are adjacent to $x$, that is, $N(x) = \{ y \in V(G) \, | \, \{x,y\} \in E(G)\}$. The {\em degree} of a vertex $x$ is the cardinality of $N(x)$. A {\em leaf} is a vertex of degree one, and a {\em tree} is a connected graph where every induced subgraph has a leaf. A walk of length $s$ is a collection of vertices and edges $x_0, e_1, x_1, e_2, \ldots , e_s, x_s$ where $e_i=x_{i-1}x_i$ for $1 \leq i \leq s$. A walk without repeated vertices is a {\em path}. A walk where $x_0=x_s$ but no other vertices are repeated is a {\em cycle}. It is easy to see that a tree is a graph with no cycles and if $T$ is a tree, then for any vertices $x,y \in V(G)$ there is a unique path between $x$ and $y$. The length of this path is the {\em distance} between $x$ and $y$, which is denoted by $d(x,y)$. In a general graph, $d(x,y)$ is the minimum of the lengths of all paths connecting $x$ and $y$. A {\em forest} is a collection of trees. An {\em isolated vertex} is a vertex $x$ with $N(x)=\emptyset$. Since $k[x_1, \ldots , x_n, y]/(I,y) \cong k[x_1, \ldots , x_n]/I$ for any monomial ideal $I$ whose generators lie in $k[x_1, \ldots, x_n]$, the graphs throughout this paper are generally assumed to be free of isolated vertices.

There are two common constructions used to produce smaller, related graphs from a fixed graph that will be useful throughout the paper. Both extend naturally to clutters. One is the {\em deletion} $G\setminus x$, which is formed by removing $x$ from the vertex set of $G$ and deleting any edge in $G$ that contains $x$. This has the effect of setting $x=0$, or of passing to the quotient ring $R/(x)$. The other operation is the {\em contraction}, $G/x$. This is performed by removing $x$ from the vertex set and removing $x$ from any edge that contains $x$. When $G$ is a graph, this will result in each vertex in $N(x)$ becoming an isolated vertex. This operation has the effect of setting $x=1$, or of passing to the localization $R_x$. A {\em minor} of a graph or clutter is formed by performing any combination of deletions and contractions.

If $G$ is a graph or $\C$ is a clutter, a {\em minimal vertex cover} of $G$ or $\C$ is a set $C \subset V$ such that for every $e \in E$, $e \cap C \neq \emptyset$ and $C$ is minimal with respect to this property, meaning if $C'$ is any proper subset of $C$, then there exists an edge $e \in E$ with $e \cap C' = \emptyset$. The minimum cardinality of a minimal vertex cover of $G$ (or $\C$) is denoted by $\alpha_0=\alpha_0(G)$. A prime ideal $P$ is a {\em minimal prime} of an ideal $I$ if $I \subset P$ and if $Q$ is a prime ideal with $I \subset Q \subset P$, then $Q=P$. It is straightforward to check that $C$ is a minimal prime of $G$ or $\C$ if and only if the prime ideal $P$ generated by the variables corresponding to vertices of $C$ is a minimal prime of $I(G)$ or $I(\C)$. Thus $\alpha_0=\height(I)$. Two basic facts about minimal vertex covers that will be used throughout the paper are that if $x \in V$ then there exists a minimal vertex cover $C$ with $x \in C$, and if $C'$ is a minimal vertex cover for an induced subgraph (or subclutter) $H$ of $G$, then there exists a minimal vertex cover $C$ with $C' \subset C$. To see why this second fact holds, consider $C' \cup (V(G)\setminus V(H))$. This is a vertex cover, and so it contains a minimal vertex cover. However, deleting any vertex of $C'$ would leave an edge of $H$ uncovered, so the minimal vertex cover produced must contain $C'$. 

Given a graph $G$, there is another family of square-free monomial ideals associated to $G$. For each positive integer $\ell$, define $P_{\ell}(G)$ to be the monomial ideals whose generators correspond to paths of length $\ell$ of $G$. Notice that a path of length $\ell$ contains $\ell +1$ vertices, so $P_{\ell}(G)$ is a homogeneous ideal with generators of degree $\ell +1$. When $\ell = 1$, $P_{1}(G)=I(G)$ is the edge ideal of $G$. Notice that since a path is defined to have distinct vertices, $P_{\ell}(G)$ is a square-free monomial ideal. We will primarily be interested in the case where $G$ is a tree. 

A set of edges $X \subset E$ of a graph or clutter is an {\em independent} set if for every $e_1,e_2 \in X$, $e_1 \cap e_2 = \emptyset$. A maximal set of independent edges is called a {\em matching}, and the maximum cardinality of a maximal matching is denoted by $\beta_1$.  As in \cite{GRV}, this corresponds to the {\em monomial grade} of the associated ideal $I$. Since every minimal vertex cover must contain at least one vertex from each edge in an independent set, $\alpha_0 \geq \beta_1$. The case of equality is of particular interest.
\begin{Definition}\label{Konig}
A clutter $\mathfrak{C}$ is said to satisfy the K\"{o}nig property if the cardinality of a maximum set of independent edges, $\beta_{1}$, is equal to $\alpha_{0}$, the cardinality of a minimal vertex cover.  
\end{Definition}
Combinatorially, the K\"{o}nig property is a generalization of the bipartite condition on a graph. A graph is {\em bipartite} if it has no odd cycles. Bipartite graphs are graphs for which every minor satisfies the K\"{o}nig property. Note that this property is also of interest algebraically. We have that $\C$ satisfies the K\"{o}nig property if and only if the height of $I(\C)$ is equal to the monomial grade of $I(\C)$. 

\begin{Definition} \label{Perfect Matching of Konig Type Definition}
A clutter $\C$ has a \emph{perfect matching of K\"{o}nig type} if there is a collection $e_1, \dots, e_g$ of pairwise disjoint edges, with $g=\height(I(\mathfrak{C}))$,  whose union is $V(\C)$. 
\end{Definition}

Edge ideals of clutters are sometimes viewed as facet ideals of simplicial complexes. Recall that a simplicial complex $\Delta$ is a set of vertices $V$ together with a collection of subsets of $V$, called faces, such that every subset of a face is a face. A facet is a face that is maximal with respect to inclusion. The ideal of a simplicial complex $I(\Delta)$ is the square-free monomial ideal generated by the monomials corresponding to the facets of $\Delta$. In \cite{FaridiSimplicial}, Faridi introduced the concept of a {\em simplicial tree}. A facet $F$ of $\Delta$ is a {\em leaf} if there exists a facet $G$ of $\Delta$ such that $F \cap H \subset F \cap G$ for every facet $H$ of $\Delta \setminus \{F\}$. A simplicial tree is then a simplicial complex for which every subcomplex contains a leaf. In \cite{HHTZ} a {\em special cycle} of a simplicial complex was defined to be an alternating collection of vertices and facets $x_0, F_1, x_1, F_2, \ldots , x_{s-1}, F_s, x_s$ such that $x_0=x_s$, $x_{i-1 }, x_i\in F_i$ and $x_j \not\in F_i$ for $j \neq i-1, i$. It was shown in \cite[Theorem 3.2]{HHTZ} that a simplicial tree does not contain any special odd cycles of length greater than $2$. It was also shown in \cite[Theorem 2.7]{HeVanTuyl} that a path ideal of a directed tree corresponds to a simplicial tree. This correspondence does not extend to path ideals of more general trees, as will be seen in Example~\ref{Tree not Simplicial}.

If $\Delta$ is a simplicial complex, the $1-$skeleton of $\Delta$, denoted by $\Delta^1$ is defined to be the graph on the vertices of $\Delta$ whose edges are the faces of $\Delta$ that contain precisely two elements. Note that by the definition of a simplicial complex, the $1-$skeleton of a facet of $\Delta$ will be a complete graph on the vertices of the facet. For a connected graph $G$, a {\em spanning tree} is defined to be a subgraph $T$ of $G$ such that $T$ is a connected tree and $V(T)=V(G)$. When $\Delta$ is a simplicial complex, a spanning tree of $\Delta$ is defined to be a spanning tree of $\Delta^1$. 

There is another simplicial complex, called the Stanley-Reisner complex, associated to a square-free monomial ideal that is in a sense dual to the one discussed above. This complex is denoted by $\Delta_\mathfrak{C}$. It is the complex whose faces are the independent vertex sets of $\C$.

\section{Cohen-Macaulay Path Ideals}\label{CMPathIdeals}

If $I$ is the edge ideal of a tree $T$, then by \cite[Theorem 2.4]{VillaCMGraphs}, $I$ is Cohen-Macaulay if and only if $T$ is the suspension of a subtree $T'$. The primary purpose of this section is to extend this result to path ideals of trees. In order to show the desired result, we will need to prove that path ideals of trees satisfy the K\"{o}nig property. This result holds in greater generality. Instead of requiring the ideal to be generated by all paths of a fixed lenght $\ell$, the K\"{o}nig property will hold for ideals generated by a subset of the set of paths of a fixed length, by paths of different lengths, or by monomials corresponding to subtrees of the tree that are not necessarily paths. In order to prove this more general result, we first need to introduce terminology and notation allowing for the more general generating sets.

Recall that a subtree $T'$ of a tree $T$ is a connected induced subgraph of $T$. If $F$ is a forest, a subtree of $F$ is a subtree of one of the connected components of $F$. 
Let $F$ be a forest with vertex set $V$. A \emph{subtree clutter} of $F$ is a clutter $\C$ such that $V(\C)=V(F)$ and if $\e\in E(\C)$ is an edge of $\C$ then $\e$ is a subtree of $F$. We define a {\em subtree ideal} $I=I(\C)$ to be the ideal generated by the square-free monomials corresponding to the generators of a subtree clutter $\C$.  This more general class of ideals is quite interesting and encompasses several known classes of ideals, including path ideals of trees and ideals of simplicial trees, as will be seen in Theorem~\ref{Simplicial Trees are Subtree Clutters}. Notice that if $F$ is a forest with vertex set $V$, $\mathfrak{C}$ a subtree clutter of $F$, and $v\in V$, then $\mathfrak{C}\setminus\{v\}$ is a subtree clutter for the forest $F\setminus\{v\}$. More generally, if $F'$ is an induced subgraph of $F$ with vertices in a set $V'$, define $\C \cap F' = \{ e \in \C \, | \, {\mbox{\rm the vertices of }} e \, {\mbox{\rm  are in }} V'\}$. Then $\C \cap F'$ is a subtree clutter of $F'$. 
Thus if $F=T_1\cup \cdots \cup T_n$ where $T_i$ are the trees of $F$ and $E$ is any set of subtrees of $F$, then $E_{T_i}=E \cap T_i$ is a set of subtrees of $T_i$. Moreover, $E= E_{T_1} \cup \cdots \cup E_{T_n}$ and if $E$ is an independent set, then so is $E_{T_i}$ for each $i$.

\begin{Definition}\label{Maximal}
Let $\mathfrak{C}$ be a subtree clutter of a forest $F$, let $E$ be a set of independent edges of $\mathfrak{C}$, and let $T$ be a tree of $F$. We say that $E_{T}$ is \emph{maximal} if there does not exist a set of independent edges in $\mathfrak{C} \cap T$  with cardinality larger than $|E_{T}|$.
\end{Definition}

Before proving the main result, we give two lemmas that provide basic information about independent sets. The first will allow us to focus on a particular tree within a forest, while the second shows how independent sets relate to deletion minors of a tree. Note that a minor of a tree is a forest.

\begin{Lemma}\label{Maximal Edges}
Let $\mathfrak{C}$ be a subtree clutter of a forest $F$ whose connected components are $T_{1},T_{2},...,T_{n}$. Then $E$ is a set of independent edges of maximum cardinality of $\mathfrak{C}$ if and only if $E_{T_{i}}$ is maximal for all $1\leq i \leq n$.
\end{Lemma}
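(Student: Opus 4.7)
The plan is to exploit the fact that every edge of $\mathfrak{C}$, being a subtree of $F$ and therefore connected, is entirely contained in a single connected component $T_i$ of $F$. This means that every independent set $E$ of $\mathfrak{C}$ admits a canonical disjoint decomposition $E=E_{T_1}\sqcup\cdots\sqcup E_{T_n}$ in which independence between components is automatic, since vertices of distinct $T_i$'s are disjoint. Consequently, $|E|=\sum_{i=1}^{n}|E_{T_i}|$ and testing independence of $E$ reduces to testing independence of each $E_{T_i}$ inside $\mathfrak{C}\cap T_i$.

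For the forward direction, I would argue by contradiction. Suppose $E$ has maximum cardinality among independent sets of $\mathfrak{C}$, but some $E_{T_j}$ fails to be maximal; then by definition there is an independent set $E_j'$ of $\mathfrak{C}\cap T_j$ with $|E_j'|>|E_{T_j}|$. Form $E'=(E\setminus E_{T_j})\cup E_j'$. Since $E_j'\subseteq \mathfrak{C}\cap T_j$ and every edge of $E\setminus E_{T_j}$ lies in some $T_i$ with $i\neq j$, and the vertex sets of distinct components of $F$ are disjoint, $E'$ remains independent in $\mathfrak{C}$. But $|E'|>|E|$, contradicting maximality of $|E|$.

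For the reverse direction, assume each $E_{T_i}$ is maximal in $\mathfrak{C}\cap T_i$. First, observe that $E=\bigcup_i E_{T_i}$ is independent in $\mathfrak{C}$ by the component-disjointness argument above. Next, let $E'$ be any independent set of $\mathfrak{C}$, and decompose it as $E'=\bigsqcup_i E'_{T_i}$. Each $E'_{T_i}$ is an independent set in $\mathfrak{C}\cap T_i$, so by maximality of $E_{T_i}$ we have $|E'_{T_i}|\leq |E_{T_i}|$. Summing over $i$ yields $|E'|\leq |E|$, so $E$ has maximum cardinality.

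There is really no substantive obstacle here: the essential content is the geometric fact that a subtree is connected, which forces the edge set to partition cleanly across the components of $F$. Once that observation is in hand, both directions reduce to bookkeeping on cardinalities, and the terminology of Definition~\ref{Maximal} is set up precisely to make the decomposition $E=E_{T_1}\sqcup\cdots\sqcup E_{T_n}$ respect both independence and size.
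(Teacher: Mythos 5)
Your proof is correct and follows essentially the same route as the paper: the forward direction is the same exchange-and-contradict argument, and your reverse direction simply makes explicit the component-wise comparison that the paper dismisses as "easily verified." No gaps.
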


\begin{proof}
Suppose $E_{T_{i}}$ is not maximal for some $i$. Then there exists an $E_{T_{i}}'$ of larger cardinality than $|E_{T_{i}}|$ whose edges contain vertices only from the tree $T_{i}$. Thus the set $E_{T_{1}} \cup\cdots \cup E_{T_{i-1}}\cup E_{T_{i}}'\cup E_{T_{i+1}}\cup\cdots \cup E_{T_{n}}$ is a set of independent edges of $\mathfrak{C}$ cardinality greater than $|E_{T_{1}}\cup \cdots \cup E_{T_{i}}\cup  \cdots \cup E_{T_n}|=|E|$, a contradiction. 
 
Since $E=\bigcup_{i=1}^{n}E_{T_{i}}$, the converse is easily verified. 
\end{proof}

\begin{Lemma}\label{Vertex Missing 2}
Let $\C$ be a subtree clutter of a tree $T$ with $V=V(T)$. Let $E_T$ be a set of independent edges of $\C$, and let $\mathfrak{e}\in E_T$ be a subtree of $T$. Let $v$ be a vertex of $\mathfrak{e}$. Then $E_{T}=E_{T_{1}}\cup E_{T_{2}}\cup\cdots \cup E_{T_{n}}\cup \{ \mathfrak{e} \}$ where $T_1, \ldots , T_n$ are the connected components of $T\backslash \{v\}$ and $E_{T_i}=E_T\cap T_i$.
\end{Lemma}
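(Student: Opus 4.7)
The plan is to show that every edge of $E_T$ other than $\mathfrak{e}$ lies entirely in a single component $T_i$ of $T \setminus \{v\}$, so that the claimed decomposition follows immediately. First I would take an arbitrary edge $\mathfrak{e}' \in E_T$ with $\mathfrak{e}' \neq \mathfrak{e}$. Because $E_T$ is an independent set, the supports of $\mathfrak{e}$ and $\mathfrak{e}'$ are disjoint, and since $v$ is a vertex of $\mathfrak{e}$, we must have $v \notin \mathfrak{e}'$. Consequently the vertex set of $\mathfrak{e}'$ is contained in $V(T) \setminus \{v\}$.

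Next I would invoke the defining property of a subtree clutter: every edge of $\C$ is a subtree of $T$, hence connected. Since $\mathfrak{e}'$ is a connected subgraph of $T$ whose vertices all lie in $V(T) \setminus \{v\}$, and since the connected components of $T \setminus \{v\}$ are precisely $T_1,\dots,T_n$, the vertices of $\mathfrak{e}'$ must all lie in a single component $T_i$. Thus $\mathfrak{e}' \in E_T \cap T_i = E_{T_i}$. This gives the containment $E_T \subseteq E_{T_1} \cup \cdots \cup E_{T_n} \cup \{\mathfrak{e}\}$, and the reverse containment is immediate from the definition of $E_{T_i}$ together with the fact that $\mathfrak{e} \in E_T$.

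Since the proof is essentially a one-step consequence of connectedness together with the independence of $E_T$, I do not expect a genuine obstacle. The only subtlety worth noting is that the union in the conclusion is actually disjoint: distinct components $T_i$ and $T_j$ share no vertices, and $\mathfrak{e}$ itself is not listed among the $E_{T_i}$ because $v \in \mathfrak{e}$ prevents $\mathfrak{e}$ from being a subgraph of any component of $T \setminus \{v\}$. This cleanly separates the edge containing $v$ from the rest, which is the form in which the lemma will later be applied to set up inductive arguments on subtree clutters.
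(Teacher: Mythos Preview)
Your proposal is correct and follows essentially the same approach as the paper's proof: both argue that any $\mathfrak{e}' \in E_T \setminus \{\mathfrak{e}\}$ avoids $v$ by independence and then, by connectedness of the subtree $\mathfrak{e}'$, lies entirely inside one component $T_i$ of $T\setminus\{v\}$. The paper phrases this last step as a contradiction (two vertices of $\mathfrak{e}'$ in different components would yield a $v$-avoiding path between neighbors of $v$), whereas you invoke the general fact that a connected subgraph avoiding $v$ sits in a single component of $T\setminus\{v\}$; these are the same observation.
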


\begin{proof}
Let $N(v)=\{v_{1}, v_{2},...,v_{n}\}$. Then $T\setminus \{v\}$ has $n$ connected components. Let $T_{i}$ be the tree of $F\setminus\{v\}$ that contains $v_{i}$. Suppose $i\not=j$. Note that since $T$ is a tree, the unique path from $v_{i}$ to $v_{j}$ must pass through $v$. 
 
To show that $E_{T}=\bigcup_{i=1}^{n} E_{T_{i}}\cup\{\mathfrak{e}\}$, we need only show $E_{T}\subseteq \bigcup_{i=1}^{n} E_{T_{i}}\cup\{\mathfrak{e}\}$ as the other inclusion is clear. Let $\mathfrak{u}\in E_{T}\backslash \{e\}$. Since $E_T$ is an independent set, $\mathfrak{u} \cap \mathfrak{e}=\emptyset$. Suppose there exist two vertices in $\mathfrak{u}$, $u_{i}$ and $u_{j}$, such that $u_{i}$ is a vertex of $T_{i}$ and $u_{j}$ is a vertex of $T_{j}$ for $i\not= j$. Now $v_i$ is connected to $u_i$ in $T_i$, $v_j$ is connected to $u_j$ in $T_j$, and $u_i$ is connected to $u_j$ in $\mathfrak{u}$. Thus there is a path in $T$ from $v_{i}$ to $v_{j}$ that does not pass through $v$, a contradiction. Thus we have that $\mathfrak{u}$ is a subtree of some $T_{i}$ and that $\mathfrak{u}\in E_{T_{i}}$.
\end{proof}

\bigskip

The following lemma extends Lemma~\ref{Vertex Missing 2} to address maximality. It will allow us to use a recursive argument to obtain the K\"{o}nig property for subtree clutters.

\begin{Lemma}\label{Forest Independent Trees}
If $F$ is a forest, $\mathfrak{C}$ a subtree clutter of $F$, $E$ is a maximal set of independent edges of $\mathfrak{C}$, and  $\mathfrak{e}\in E$, then there exists a vertex $v\in \mathfrak{e}$ such that $E\setminus\{\mathfrak{e}\}$ is a maximal set of independent edges for $\mathfrak{C}\setminus \{v\}$.
\end{Lemma}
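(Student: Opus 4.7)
The plan is to reduce to a single tree and then locate $v$ via a Helly-type argument for subtrees.

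First, by Lemma~\ref{Maximal Edges}, it suffices to handle the case where $F$ is a single tree $T$ containing $\mathfrak{e}$: since $v\in\mathfrak{e}\subseteq T$, the restrictions of $E\setminus\{\mathfrak{e}\}$ to the other trees of $F\setminus\{v\}$ equal those of $E$ and are maximal by hypothesis, so only the restriction to $T\setminus\{v\}$ needs attention. In the trivial subcase $|V(\mathfrak{e})|=1$, the unique vertex $v$ of $\mathfrak{e}$ works: any independent set of $\mathfrak{C}\setminus\{v\}$ of size at least $|E|$ would be disjoint from $\mathfrak{e}=\{v\}$, giving a strictly larger independent set of $\mathfrak{C}$ and contradicting the maximality of $E$. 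By Lemma~\ref{Vertex Missing 2}, the general task reduces to finding $v\in V(\mathfrak{e})$ such that $E_{T_i}$ is maximal in $\mathfrak{C}\cap T_i$ for each component $T_i$ of $T\setminus\{v\}$.

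The key idea is to exploit the Helly property of subtrees of a tree. Let $\mathcal{E}'$ be the collection of edges $\mathfrak{g}\in\mathfrak{C}\setminus\{\mathfrak{e}\}$ with $V(\mathfrak{g})\cap V(\mathfrak{e})\neq\emptyset$ that are also disjoint from every edge of $E\setminus\{\mathfrak{e}\}$; these are precisely the candidates that could ``replace'' $\mathfrak{e}$ in $E$. For $\mathfrak{g}\in\mathcal{E}'$, set $A_\mathfrak{g}:=V(\mathfrak{g})\cap V(\mathfrak{e})$, a non-empty subtree of $\mathfrak{e}$. If $A_\mathfrak{g}\cap A_{\mathfrak{g}'}=\emptyset$ for some $\mathfrak{g},\mathfrak{g}'\in\mathcal{E}'$, then the tree structure of $T$ forces $V(\mathfrak{g})\cap V(\mathfrak{g}')=\emptyset$: a common vertex $x\notin V(\mathfrak{e})$ would, by the uniqueness of paths in $T$ and the connectedness of $\mathfrak{g}$ and $\mathfrak{g}'$, force both subtrees to contain the gateway vertex of $\mathfrak{e}$ closest to $x$, producing an element of $A_\mathfrak{g}\cap A_{\mathfrak{g}'}$. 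But then $(E\setminus\{\mathfrak{e}\})\cup\{\mathfrak{g},\mathfrak{g}'\}$ would be an independent set of size $|E|+1$, contradicting maximality of $E$. Hence the $A_\mathfrak{g}$ pairwise intersect, and the Helly property for subtrees of the tree $\mathfrak{e}$ produces a common vertex $v\in\bigcap_{\mathfrak{g}\in\mathcal{E}'}A_\mathfrak{g}$.

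To verify that this $v$ works, suppose for contradiction that $M\subseteq\mathfrak{C}\setminus\{v\}$ is independent with $|M|\geq|E|$. Then $|M|=|E|$ and $\mathfrak{e}\notin M$, so by maximality of $E$, $M$ must contain some edge $\mathfrak{g}$ overlapping $\mathfrak{e}$. A swap argument---iteratively replacing any such $\mathfrak{g}\in M$ that overlaps an edge $\mathfrak{h}\in E\setminus\{\mathfrak{e}\}$ by $\mathfrak{h}$ (which is disjoint from $\mathfrak{e}$ and hence avoids $v$)---is intended to transform $M$ into another maximum independent set avoiding $v$, all of whose edges overlapping $\mathfrak{e}$ belong to $\mathcal{E}'$; but every edge of $\mathcal{E}'$ contains $v$, giving the desired contradiction. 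The hard part is justifying this exchange: one must show the swaps can be arranged to preserve both independence and size of $M$, which relies on the tree structure constraining how edges of $\mathfrak{C}$ overlap and will likely require a separate inductive argument on the number of edges of $M$ lying outside $\mathcal{E}'$ that overlap $\mathfrak{e}$.
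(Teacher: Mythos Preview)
Your Helly idea is attractive, but the argument has a genuine gap: the vertex $v$ produced by intersecting the sets $A_{\mathfrak g}$ over your family $\mathcal E'$ need not work, and the swap you propose to verify it can fail. Take $T$ to be the path $v_1\!-\!v_2\!-\!v_3\!-\!v_4\!-\!v_5$, let $\mathfrak C$ consist of the four edges $\mathfrak e_i=\{v_i,v_{i+1}\}$ for $1\le i\le 4$, and set $E=\{\mathfrak e_1,\mathfrak e_3\}$, $\mathfrak e=\mathfrak e_1$. The only edge of $\mathfrak C\setminus\{\mathfrak e\}$ meeting $\mathfrak e$ is $\mathfrak e_2$, and it also meets $\mathfrak e_3\in E\setminus\{\mathfrak e\}$; hence $\mathcal E'=\emptyset$ and your Helly intersection is all of $V(\mathfrak e)=\{v_1,v_2\}$. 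But $v=v_1$ does \emph{not} work: in $\mathfrak C\setminus\{v_1\}$ the set $M=\{\mathfrak e_2,\mathfrak e_4\}$ is independent of size $2>|E\setminus\{\mathfrak e\}|$. Your swap would replace $\mathfrak e_2\in M$ by $\mathfrak e_3\in E\setminus\{\mathfrak e\}$, but $\mathfrak e_3$ meets $\mathfrak e_4\in M$, so independence is destroyed and the promised induction on the number of ``bad'' edges cannot get off the ground. The underlying problem is that $\mathcal E'$ only records edges that can replace $\mathfrak e$ \emph{directly}; it misses rearrangements of $E$ along a chain of overlaps that ultimately free up room near $\mathfrak e$.

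The paper's proof avoids this by walking inside $\mathfrak e$. Using Lemma~\ref{Vertex Missing 2} it first shows that for any $v\in\mathfrak e$ at most one component of $T\setminus\{v\}$ can carry a non-maximal restriction of $E$, and that this bad component must meet $\mathfrak e$. Starting from a leaf of $\mathfrak e$ and repeatedly stepping to the unique neighbour in $\mathfrak e$ lying in the bad component, one obtains a path in $\mathfrak e$; an edge-deletion argument shows the walk never backtracks, so by finiteness it reaches a vertex where no component is bad. If you want to salvage the Helly route, you would need either to enlarge $\mathcal E'$ (e.g.\ to all edges meeting $\mathfrak e$ that lie in \emph{some} maximum matching of $\mathfrak C$) and redo the pairwise-intersection check for that larger family, or to replace the single swap by a genuine augmenting-chain exchange lemma; either fix is substantially more work than what you sketched.
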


\begin{proof}
Fix an edge $\mathfrak{e}\in E$ and let $v \in \e$ be any vertex of $\e$. First note that $\e$ is contained in some connected component $T$ of $F$. If $S$ is any other connected component of $F$, then $S$ remains unchanged when passing to $F\setminus \{ v\}$, as does $E_S$. Thus we may assume that $F=T$ is a tree.
Let $T_1, \ldots ,T_n$ be the connected components of $T \setminus \{v\}$. Since $E = \bigcup_{i=1}^n E_{T_i} \cup \{ \e\}$ by Lemma~\ref{Vertex Missing 2} and $E$ is maximal, then $E_{T_i}$ is not maximal for at most one $i$. To see this, suppose two of the sets, say $E_{T_1}$ and $E_{T_2}$ are not maximal. Then there exist independents sets $E_{T_1}'$ and $E_{T_2}'$ in $T_1$ and $T_2$ respectively with $|E_{T_i}'| \geq |E_{T_i}|+1$ for $i=1,2$. Then $E'=E_{T_1}' \cup E_{T_2}' \cup  \bigcup_{i=3}^n E_{T_i}$ is an independent set with cardinality at least $|E|+1$, a contradiction. Notice that a similar argument shows that the unique non-maximal set $E_{T_i}$ must have an element with a nontrivial intersection with $\e$.

Since $\mathfrak{e}$ is a subtree of $T$, there exists a vertex $x_1$ of $\e$ that is a leaf of $\e$. If $E\setminus \{\e \}$ is a maximal set of independent edges for $\C \setminus \{ x_1\}$, the result holds. If not, the unique connected component $T_{i_1}$ of $T\setminus \{x_1\}$ for which $E_{T_{i_1}}$ is not maximal must intersect $\e$. Since $x_1$ is a leaf of $\e$, $T_{i_1}$ is the only connected component of $T\setminus \{x_1\}$ that intersects $\e$, and there is a unique neighbor $x_2$ of $x_1$ in $\e \cap T_{i_1}$. Now consider $T\setminus \{ x_2\}$. Again either the result holds for $x_2$ and we are done, or there is precisely one connected component $T_{i_2}$ of $T\setminus \{ x_2\}$ for which $E_{T_{i_2}}$ is not maximal. Since $T_{i_2}$ must intersect $\e$, there will be a unique vertex $x_3 \in N(x_2) \cap \e \cap T_{i_2}$. We claim that $x_3 \not= x_1$. To see this, form $T \setminus \{x_1x_2\}$. Since $T$ is a tree, deleting one edge produces precisely two connected components, which we will denote by $T_{x_1}$ and $T_{x_2}$ where $T_{x_i}$ contains $x_i$ for $i=1,2$. As before, at most one of $E \cap T_{x_1}$ or $E \cap T_{x_2}$ is not maximal. Notice that $T_{x_1}$ is precisely the connected component of $T\setminus\{ x_2\}$ that contains $x_1$, and $T_{x_2}$ is precisely the connected component of $T\setminus\{ x_1\}$ that contains $x_2$, which is denoted by $T_{i_1}$ above. Since $E_{T_{i_1}}=E_{T_{x_2}}$ is not maximal, then $E_{T_{x_1}}$ is maximal. Thus $x_3 \not= x_1$.  

Now we may repeat the process. Either the result holds for $x_3$, or there is precisely one connected component $T_{i_3}$ of $T\setminus\{x_3\}$ for which $E_{T_{i_3}}$ is not maximal. As before, there is a unique vertex $x_4 \in N(x_3) \cap \e \cap T_{i_3}$. Deleting the edge $x_2x_3$ and following the argument above shows that $x_4 \not= x_2$. Notice that this process produces a path $\{ x_1, x_2, x_3, x_4\}$ in $\e$. This process can be continued to produce a path $\{x_1, \ldots , x_t\}$ in $\e$ for which $E\setminus \e$ is not a maximal set of independent edges for $\C \setminus \{x_i\}$  for $1 \leq i \leq t-1$. Since $\e$ has a finite diameter, the process must terminate, say at $x_t$. Then $E \setminus \{\e\}$ is a maximal set of independent edges for $\C \setminus \{x_t\}$.
\end{proof} 

We are now ready to show that subtree ideals, which form a very general class of monomial ideals associated to forests, satisfy the K\"{o}nig property. This generalizes \cite[Theorem 5.3]{FaridiCM}, as will be seen in  Theorem~\ref{Simplicial Trees are Subtree Clutters}. Notice, however, that subtree ideals, even the special case of path ideals, are not necessarily odd cycle free. See Example~\ref{Tree not Simplicial}.

\begin{Theorem}\label{Trees are Konig}

If $F$ is a forest and $\mathfrak{C}$ is a subtree clutter of $F$, then $\mathfrak{C}$ has the K\"{o}nig property.
\end{Theorem}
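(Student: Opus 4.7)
The plan is to prove the theorem by induction on $\beta_1(\mathfrak{C})$, using Lemma~\ref{Forest Independent Trees} as the inductive engine. Since $\alpha_0 \geq \beta_1$ holds for any clutter, the content lies in producing a vertex cover of cardinality $\beta_1$.

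First I would dispose of the base case: if $\beta_1(\mathfrak{C}) = 0$, then $\mathfrak{C}$ has no edges, so the empty set is a vertex cover and $\alpha_0 = 0 = \beta_1$. For the inductive step, suppose $\beta_1(\mathfrak{C}) = g \geq 1$ and assume the theorem holds for every subtree clutter of any forest with smaller maximum matching number. Choose a maximum independent set $E$ of edges of $\mathfrak{C}$, so $|E| = g$, and pick any $\e \in E$. By Lemma~\ref{Forest Independent Trees}, there is a vertex $v \in \e$ such that $E \setminus \{\e\}$ is a maximal (i.e., maximum cardinality) independent set of edges of the clutter $\mathfrak{C} \setminus \{v\}$; this gives $\beta_1(\mathfrak{C} \setminus \{v\}) = g - 1$.

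Now $\mathfrak{C} \setminus \{v\}$ is a subtree clutter of the forest $F \setminus \{v\}$, as noted in the paragraph preceding Definition~\ref{Maximal}. By the induction hypothesis, $\alpha_0(\mathfrak{C} \setminus \{v\}) = g - 1$, so there is a vertex cover $C'$ of $\mathfrak{C} \setminus \{v\}$ with $|C'| = g-1$. I would then verify that $C := C' \cup \{v\}$ is a vertex cover of $\mathfrak{C}$: every edge of $\mathfrak{C}$ containing $v$ is covered by $v$, and every edge of $\mathfrak{C}$ not containing $v$ is an edge of $\mathfrak{C} \setminus \{v\}$ and thus meets $C'$. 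Hence $\alpha_0(\mathfrak{C}) \leq |C| = g = \beta_1(\mathfrak{C})$, which combined with the reverse inequality yields equality.

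The only nontrivial step is the existence of the special vertex $v$, and that is precisely the content of Lemma~\ref{Forest Independent Trees}; once that is in hand the induction is routine. So the real obstacle has already been absorbed into the preceding lemma, and what remains at this stage of the paper is essentially a short bookkeeping argument.
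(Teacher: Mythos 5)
Your proposal is correct and is essentially the paper's own argument: the paper also deletes, one edge at a time, a vertex supplied by Lemma~\ref{Forest Independent Trees}, merely phrasing the induction as an iteration and taking the collected vertices $\{v_1,\dots,v_{\beta_1}\}$ directly as the cover rather than reassembling it from the inductive hypothesis. No gaps; the bookkeeping you describe is exactly what the paper does.
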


\begin{proof}
Let $E=\{\mathfrak{e}_{1},\mathfrak{e}_{2}, \ldots ,\mathfrak{e}_{\beta_{1}}\}$ be a set of independent edges of $\mathfrak{C}$ of maximum cardinality. Recall that $\alpha_{0}\geq \beta_{1}$ where $\alpha_0$ is the minimum cardinality of a minimal vertex cover. 

 By Lemma~\ref{Forest Independent Trees} there exists a vertex $v_{1}$ in $\mathfrak{e}_{1}$ such that $E\setminus\{\mathfrak{e}_{1}\}$ is a set of independent edges of maximum cardinality for the clutter $\mathfrak{C}\setminus\{v_{1}\}$. Again by Lemma~\ref{Forest Independent Trees} there exists a vertex $v_{2}$ in $\mathfrak{e}_{2}$ such that $E\setminus\{\mathfrak{e}_{1},\mathfrak{e}_{2}\}$ is a set of independent edges of maximum cardinality for the clutter $\mathfrak{C}\setminus\{v_{1},v_{2}\}$. This process can be repeated until $E\setminus\{\mathfrak{e}_{1},\mathfrak{e}_{2},\ldots ,\mathfrak{e}_{\beta_{1}}\}=\emptyset$ is a set of independent edges of maximum cardinality for the clutter $\mathfrak{C}\setminus\{v_{1},v_{2},\ldots ,v_{\beta_{1}}\}$. This implies that the clutter $\mathfrak{C}\setminus\{v_{1},v_{2},\ldots ,v_{\beta_{1}}\}$ has no edges, which implies that every edge in $\mathfrak{C}$ must contain a vertex in the set $\{v_{1},v_{2},\ldots ,v_{\beta_{1}}\}$. Therefore $\{v_{1},v_{2},\ldots ,v_{\beta_{1}}\}$ is a vertex cover of $\C$, so $\alpha_{0}\leq \beta_{1}$. Thus $\alpha_{0}=\beta_{1}$.
\end{proof}

\begin{Corollary} \label{PathIdealKonig}
A path ideal of a tree satisfies the K\"{o}nig property.
\end{Corollary}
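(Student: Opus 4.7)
The plan is to observe that Corollary~\ref{PathIdealKonig} is essentially an immediate specialization of Theorem~\ref{Trees are Konig}, once one verifies that a path ideal fits the framework of subtree ideals. Concretely, for a tree $T$ and positive integer $\ell$, I would consider the clutter $\mathfrak{C}_\ell$ on $V(T)$ whose edges are exactly the vertex sets of paths of length $\ell$ in $T$, so that $P_\ell(T) = I(\mathfrak{C}_\ell)$. (Note that no vertex set of a path is contained in another, since all such sets have the same cardinality $\ell+1$ and are distinct, so $\mathfrak{C}_\ell$ is indeed a clutter.)

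Next I would check that each generator corresponds to a subtree of $T$, i.e. a connected induced subgraph of $T$. Given a path $x_0, e_1, x_1, \ldots, e_\ell, x_\ell$ in $T$, its vertex set is clearly connected via the edges $e_1, \ldots, e_\ell$. To see that the induced subgraph on $\{x_0, \ldots, x_\ell\}$ has no additional edges, I would argue by contradiction: any extra edge $x_i x_j$ with $|i-j| \ge 2$, together with the subpath from $x_i$ to $x_j$ inside the given path, would form a cycle in $T$, contradicting the fact that trees are acyclic. Hence the path, viewed as an induced subgraph, is precisely a subtree of $T$, and $\mathfrak{C}_\ell$ is a subtree clutter of the tree (in particular, forest) $T$.

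With these observations in place, Theorem~\ref{Trees are Konig} applied to $F = T$ and $\mathfrak{C} = \mathfrak{C}_\ell$ gives $\alpha_0(\mathfrak{C}_\ell) = \beta_1(\mathfrak{C}_\ell)$, which by definition is the K\"onig property for $P_\ell(T)$. There is no real obstacle here — the entire content has been folded into Theorem~\ref{Trees are Konig}. The only thing to be careful about is the (trivial) verification that a path in a tree is automatically an induced subgraph, so that path ideals genuinely are a special case of subtree ideals rather than merely resembling one.
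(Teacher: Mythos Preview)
Your proposal is correct and matches the paper's approach exactly: the paper states Corollary~\ref{PathIdealKonig} with no proof, treating it as an immediate consequence of Theorem~\ref{Trees are Konig} since paths in a tree are subtrees. Your additional verification that a path in a tree is automatically an induced subgraph (and hence a subtree in the paper's sense) is a reasonable sanity check that the paper leaves implicit.
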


Now that we have established the K\"{o}nig property for a general class of ideals associated to trees, we will focus on path ideals in order to prove the main result of this section. We first need to generalize the notion of a suspension. Recall that a {\it suspension} of a graph $G'$ on vertices $\{x_1, \ldots , x_n\}$ is a graph $G$ on vertices $\{ x_1, \ldots ,x_n, y_1, \ldots ,y_n \}$ with $E(G)=E(G') \cup \bigcup_{i=1}^n \{ x_iy_i\}$. To form a {\it suspension of length $\ell$} of a graph $G$, instead of adding a single edge $x_iy_i$ to each vertex of $G$, one instead adds a path of length $\ell$ to each vertex. In the case of a suspension of a tree, it will be useful to organize this definition so that the underlying graph $G'$ is implied but not explicitly stated.

\begin{Definition} \label{Suspension Definition}
A tree $T$ with vertex set $V$ is called a suspension of length $\ell$ if $T$ has paths $P=\{p_{1},p_{2},...,p_{\beta_{1}}\}$ all of length $\ell$ such that the paths $p_1, \ldots , p_{\beta_1}$ form a perfect matching and the vertices of each $p_i$ can be ordered $\{x_{i},y_{i_{1},}y_{i_{2}},...,y_{i_{\ell}}\}$ where $\deg(y_{i_{k}})=2$ for all $1\leq k \leq \ell -1$, and $\deg(y_{i_{\ell}})=1$.
\end{Definition}

\begin{Theorem}\label{CM iff Suspension}
Let $T$ be a tree with vertex set $V$ and let $I=I_{\ell}(T)$ be the path ideal of length $\ell$ of $T$. Then $I$ is Cohen-Macaulay if and only if $T$ is a suspension of length $\ell$.
\end{Theorem}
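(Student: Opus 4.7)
The plan is to prove both implications by combining the K\"{o}nig property from Corollary~\ref{PathIdealKonig} with short exact sequences, the Depth Lemma, and the unmixedness consequence of Cohen--Macaulayness.

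For the ``if'' direction I would induct on $\beta_{1}$, the number of pendant paths. The base case $\beta_{1}=1$ is trivial: $T$ is itself a single length-$\ell$ path, so $I$ is principal and $R/I$ is a complete intersection. For the inductive step, pick a trunk vertex $x_{1}$ whose image is a leaf of the ``trunk tree'' obtained from $T$ by contracting each $p_{i}$ to $x_{i}$, and consider
\[
0 \to R/(I:x_{1}) \xrightarrow{\cdot x_{1}} R/I \to R/(I,x_{1}) \to 0.
\]
Setting $x_{1}=0$ disconnects the pendant $\{y_{1_{1}},\dots,y_{1_{\ell}}\}$ (a path of length $\ell-1$, so contributing no degree-$(\ell+1)$ generator) from a subtree $T''$, which is itself a suspension of length $\ell$ with $\beta_{1}-1$ pendants. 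Hence $R/(I,x_{1}) \cong (k[V(T'')]/I_{\ell}(T''))\otimes_{k} k[y_{1_{1}},\dots,y_{1_{\ell}}]$ has depth $\beta_{1}\ell$ by induction. For $R/(I:x_{1})$ I would run a secondary short exact sequence on the leaf $y_{1_{\ell}}$, whose only generator in $(I:x_{1})$ is the shortened pendant monomial $y_{1_{1}}\cdots y_{1_{\ell}}$, and iterate along the pendant until the quotient reduces to $I_{\ell}(T'')$ extended by free polynomial variables, which again has depth at least $\beta_{1}\ell$. The Depth Lemma then yields $\depth R/I \geq \beta_{1}\ell = \dim R/I$.

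For the ``only if'' direction, Cohen--Macaulayness forces $I$ to be unmixed, so every minimal prime has height $\beta_{1}$. Fix a maximum matching $\{p_{1},\dots,p_{\beta_{1}}\}$ as guaranteed by Lemma~\ref{Forest Independent Trees}. I would first show the matching is perfect: if some vertex $v$ lies in no $p_{i}$, then combining the constructive vertex cover $\{v_{1},\dots,v_{\beta_{1}}\}$ from that lemma with an extra cover element forced by a length-$\ell$ path through $v$ produces a minimal vertex cover of size exceeding $\beta_{1}$, contradicting unmixedness. I would then establish the pendant structure of Definition~\ref{Suspension Definition} for each $p_{i}$: if some interior vertex $y_{i_{k}}$ with $1\leq k\leq \ell-1$ had a neighbor outside $p_{i}$, or if neither endpoint of $p_{i}$ were a leaf of $T$, then walking along the external neighbor would build either a larger independent collection of length-$\ell$ paths (contradicting maximality) or an oversized minimal vertex cover (contradicting unmixedness). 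These constraints force exactly the data required by the suspension definition.

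The main obstacle is bounding $\depth R/(I:x_{1})$ in the ``if'' direction, because $(I:x_{1})$ is not a path ideal of any tree: its generators mix genuine length-$\ell$ tree paths with the shortened pendant monomials arising from colon by $x_{1}$. The inductive hypothesis therefore does not apply to $(I:x_{1})$ directly, and the crux is choosing the right sequence of further splittings (most naturally on the pendant variables $y_{1_{j}}$) so that the resulting quotients and colons become polynomial extensions of path ideals on genuine suspensions. On the ``only if'' side, the subtle point is simultaneously exploiting the freedom in selecting vertices of the K\"{o}nig cover from Lemma~\ref{Forest Independent Trees} and the unmixedness hypothesis to control both the perfect matching and the local geometry of every $p_{i}$.
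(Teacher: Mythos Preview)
Your ``only if'' direction follows the paper's strategy---unmixedness plus K\"onig plus vertex-cover counting---but your perfect-matching step has a small slip. You cannot add $v$ to the K\"onig cover $\{v_1,\dots,v_{\beta_1}\}$ and claim the result is a \emph{minimal} cover of size $\beta_1+1$: that cover already covers every edge, so $v$ is redundant. The paper instead starts from any minimal vertex cover $U$ that contains $v$ (such $U$ exists because $v$ lies in some edge) and observes that $U$ must also hit each disjoint $p_i$, forcing $|U|\ge\beta_1+1$. The remaining degree constraints on the $p_i$ are handled in the paper by explicit constructions of minimal covers of size $3$ or $4$ on the induced subtree of two or three adjacent matching paths, then extending; your sketch points in this direction but omits these constructions.

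Your ``if'' direction is a genuinely different route. The paper does not induct or use short exact sequences at all: it verifies directly that the perfect matching $P=\{p_1,\dots,p_{\beta_1}\}$ is of K\"onig type and that for any two edges $f_1,f_2$ of the clutter and any $i$, one of $f_1\cap p_i$, $f_2\cap p_i$ contains the other. This nesting condition is exactly the hypothesis of \cite[Theorem~2.16]{MRV}, which yields pure shellability of $\Delta_{\mathfrak C}$ and hence Cohen--Macaulayness in one stroke. The check is a two-line observation using the pendant structure: any path meeting $p_i$ must contain an initial segment $\{x_i,y_{i_1},\dots,y_{i_t}\}$.

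Your obstacle with $(I:x_1)$ is real. That colon ideal contains, for each $0\le k\le\ell$, all monomials $y_{1_1}\cdots y_{1_k}\cdot q$ where $q$ ranges over paths of length $\ell-k-1$ in $T''$ starting at the unique trunk neighbor of $x_1$; this is not a path ideal of any tree, and your proposed iteration along the pendant generates a growing tree of further colon/quotient ideals, each of which must be bounded below by $\beta_1\ell$. This may be salvageable, but it is substantially more work than the shellability argument, which sidesteps the issue entirely. The trade-off: the paper's proof is short but imports a nontrivial external criterion, while yours would be self-contained (using only the Depth Lemma) at the cost of a delicate multi-level induction that you have not yet closed.
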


\begin{proof}
We will begin by assuming that $T$ is a suspension of length $\ell$. Using the notation from Definition~\ref{Suspension Definition}, $T$ has a set of paths $P=\{p_{1},p_{2},...,p_{\beta_{1}}\}$ that form a perfect matching.  Moreover, for every $i$, $y_{i_{\ell}}$ has $y_{i_{\ell -1}}$ as its unique neighbor, and for every $1\leq k \leq \ell -1$, the only neighbors of $y_{i_k}$ are $y_{i_{k-1}}$ and $y_{i_{k+1}}$, where $y_{i_0}=x_i$ for ease of notation. Suppose $f$ is any edge of $\C$. Since  $f$ corresponds to a path of length $\ell$ of $T$, if $y_{i_k}\in f$ for some $i, k$, it follows that $y_{i_{k-1}} \in f$ as well. Thus the set $\{ x_1, \ldots , x_{\beta_1}\}$ forms a vertex cover of $\C$. Since any vertex cover must contain at least $\beta_1$ vertices, else it would not cover the paths in $P$, we have that $P$ forms a perfect matching of $\C$ of K\"{o}nig type. 

Now suppose $f_1,f_2$ are any two edges of $\mathfrak{C}$ and fix any $i$. If $f_1 \cap p_i = \emptyset$, then $f_1 \cap p_i \subset f_2 \cap p_i$. The result is similar if $f_2 \cap p_i = \emptyset$. Assume neither intersection is empty. Let $t$ be the greatest integer such that $y_{i_t} \in f_1$ and let $s$ be the greatest integer such that $y_{i_s} \in f_2$. As above, $y_{i_j} \in f_1$ for $0 \leq j \leq t$ and $y_{i_j} \in f_2$ for $0 \leq j \leq s$. It follows that if $t \leq s$, then $f_1 \cap p_i \subset f_2 \cap p_i$ and if $s \leq t$, then $f_2 \cap p_i \subset f_1 \cap p_i$. Thus by \cite[Theorem 2.16]{MRV} we have that $\Delta_{\C}$ is pure shellable, and thus  $\C$ is Cohen-Macaulay (see for example \cite[Theorem III.2.5]{Stanley} or \cite[Theorem 5.3.18]{monalg}).

\medskip

For the converse, suppose $R/I(\C)$ is Cohen-Macaulay. Then $I(\C)$ is unmixed, so every minimal vertex cover of $T$ has the same cardinality $\alpha_0$. Since $T$ is a tree, $\C$ satisfies the K\"{o}nig property by Corollary~\ref{PathIdealKonig}. Thus we can find a set of independent paths $P=\{p_{1} ,p_{2}, \ldots, p_{\alpha_{0}}   \}$ in $T$.  Suppose that $P$ is not a partition of $T$. Then there exists a vertex $v$ of $T$ that is not in any path in $P$. Now $v$ is contained in some minimal vertex cover $U$ of $T$. Since $U$ must also cover each of the paths in $P$, $|U|\geq \alpha_{0} +1$, a contradiction to $I(\C)$ being unmixed.  Hence $P$ is a perfect matching of $T$. 
 
To obtain the desired ordering on each of the paths of $P$, order the vertices of $p_i$ so that $p_i=\{x_{1},x_{2},...,x_{\ell+1}\}$ where $x_j$ is adjacent to $x_{j+1}$ for $1 \leq j \leq \ell$. Now suppose that there is an $s\in \{ 2, \ldots , \ell\}$ with $\deg (x_s) \geq 3$. Then there is an edge $x_sz$ with $z \not\in p_i$. Since $P$ is a partition, $z \in p_q$ for some $q$. Order the vertices of $p_q$ by $\{y_{1},y_{2},...,y_{\ell+1}\}$ where $y_k$ is adjacent to $y_{k+1}$ for $1 \leq k \leq \ell$. Then $z=y_t$ for some $1 \leq t \leq \ell+1$. Notice that since $T$ is a tree, the only edge of the form $x_jy_k$ is $x_sy_t$.  Using a relabling of the paths $p_i$ and $p_q$ if necessary, we may assume $d(y_{1}, y_{t}) \leq d(y_{t}, y_{\ell+1})$, and $d(x_{1}, x_{s}) \leq d(x_{s}, x_{\ell+1})$.  Thus $s, t \leq {\frac{\ell+2}{2}}$. Using symmetry, with the roles of $x_j$ and $y_k$ reversed, we may assume without loss of generality that $t\leq s$. Then $t \leq t+\ell-s+1 \leq \ell+1$ and $2s-1 \leq \ell+1$, so we may consider the vertices $x_{1},x_{2s-1},$ and $y_{t+\ell-s+1}$.
\[
 \begin{matrix}
 
\setlength{\unitlength}{1cm}
\begin{picture}(0,2)

\put(0,0){\line(5,0){5}}

\put(0,1){\line(1,0){5}}

\put(1,0){\line(1,1){1}}

\put(0,-.3){$y_{1}$}
\put(0,0){\circle*{.2}}

\put(1,-.3){$y_{t}$}
\put(1,0){\circle*{.2}}

\put(3,-.3){$y_{t+\ell-s+1}$}
\put(3,0){\circle*{.2}}

\put(5,-.3){$y_{\ell+1}$}
\put(5,0){\circle*{.2}}

\put(0,1.3){$x_{1}$}
\put(0,1){\circle*{.2}}

\put(2,1.3){$x_{s}$}
\put(2,1){\circle*{.2}}

\put(3.7,1.3){$x_{2s-1}$}
\put(4,1){\circle*{.2}}

\put(5,1.3){$x_{\ell+1}$}
\put(5,1){\circle*{.2}}
\end{picture}

&&&&&&&&&

 \end{matrix}
\]

Notice that $d(x_{1}, y_{t+\ell-s+1}) = s-1+1+(t+\ell-s+1-t)=\ell+1$, so there is a path $h_1$ of length $\ell$ connecting $x_1$ and $y_{t+\ell-s}$. Similarly, there is a path $h_2$ of length $l$ connecting $x_{2s-1}$ and $y_{t-\ell+s}$, and a path $h_3$ of length $\ell$ connecting $x_2$ and $y_{t+\ell-s+1}$. Notice that $d(y_{t}, y_{t+\ell-s+1})=\ell-s+1$. Suppose $t \leq \ell-s+1$. Then $d(y_1, y_t) \leq \ell-s$, and $d(x_1, y_1) \leq s-1+1+\ell-s=\ell$ and $d(y_1, x_{2s-1})\leq \ell-s+1+(2s-1-s)=\ell$. Thus every path of length $\ell$ in the induced subtree on $\{ p_i, p_q\}$ must contain one of the vertices $x_1, x_{2s-1}, y_{t+\ell-s+1}$. By examining the paths $h_1, h_2, h_3$, it follows that the set $M=\{ x_1, x_{2s-1}, y_{t+\ell-s+1} \}$ is a minimal vertex cover of the clutter of paths of length $\ell$ of the induced subtree on $\{p_i, p_q \}$. 

Now suppose that $t>\ell-s+1$. Then $1 \leq t-\ell+s-1 <t$, and there is a path of length $\ell$ connecting $x_2$ and $y_{t-\ell+s-1}$. An argument similar to the one above shows that $M=\{ x_1, x_{2s-1}, y_{t+\ell-s+1}, y_{t-\ell+s-1} \}$ is a minimal vertex cover of the clutter of paths of length $\ell$ of the induced subtree on $\{p_i, p_q \}$. In either case, since $M$ is a minimal vertex cover of the clutter of paths of length $\ell$ of the induced subtree on $\{p_i, p_q \}$, $M$ can be extended to a minimal vertex cover $U$ of $\C$. Since $P$ is a partition of $\C$, at least $\alpha_0 -2$ additional vertices are needed to cover the paths in $P$. Thus $|U| \geq |M| + \alpha_0 - 2 > \alpha_0$, which contradicts $I(\C)$ being unmixed. Thus for each $i$, every vertex of $p_i$ that is not an endpoint has degree precisely $2$. 

\medskip

Suppose  that there is a path $p_{i}\in P$ with its endpoints both having degree $2$ or greater. Let $p_{i}$ have the ordered vertices $\{x_{1},x_{2},...,x_{\ell+1}\}$. Then $x_{1}$ and $x_{\ell+1}$ are adjacent to endpoints of distinct paths $p_j$ and $p_q$ in $P$, respectively. 
Using the usual conventions, order the vertices of $p_j$ by $\{y_{1}, y_{2},...,y_{\ell+1} \}$ where  $x_{1}$ is adjacent to $y_{\ell+1}$ and order the vertices of $p_q$ by $\{z_{1},z_{2},...,z_{\ell+1}\}$ where $x_{\ell+1}$ is adjacent to $z_{1}$.

It is easy to see that $M=\{y_{1},x_{1},x_{\ell+1},z_{\ell+1}\}$ is a minimal vertex cover of the clutter of paths of length $\ell$ on the induced subtree of $\{ p_i, p_j, p_q\}$.  As before, $M$ can be extended to a minimal vertex cover $U$ of $\C$. Notice that $|U| \geq |M| + \alpha_0 -3 = \alpha_0 +1$, which is a contradiction to $I(\C)$ being unmixed. Thus at least one endpiont of each element of $P$ must have degree $1$. Hence $P$ satisfies the conditions of Definition~\ref{Suspension Definition} and thus if $I(\C)$ is Cohen-Macaulay then $T$ is a suspension of length $\ell$. 
\end{proof}

Since path ideals can be represented as clutters, they can also be represented as facet ideals of simplicial complexes. In \cite[Corollary 8.3]{FaridiCM}, it is shown that if $\Delta$ is a simplicial tree and $I(\Delta)$ is the ideal whose generators are the facets of $\Delta$, then $I(\Delta)$ is Cohen-Macaulay if and only if $I(\Delta)$ is unmixed, and \cite[Section 6]{FaridiCM} describes the structure of unmixed simplicial trees.
Theorem~\ref{CM iff Suspension} provides a similar characterization of the Cohen-Macaulay property involving clutters whose edges are paths of uniform length of a tree. In \cite[Theorem 2.7]{HeVanTuyl} it is shown that the path ideal of a directed tree is the facet ideal of a simplicial tree, and so Faridi's result applies to this case. However, for a non-directed tree, this need not be the case. In fact, the converse is true, as will be seen in Theorem~\ref{Simplicial Trees are Subtree Clutters}. The following example gives a tree $T$ such that $I_2(T)$ is not a simplicial tree, thus the results of Theorem~\ref{CM iff Suspension} are not implied by \cite[Corollary 8.3]{FaridiCM}.

\begin{Example}\label{Tree not Simplicial} 
Consider the tree $T$ with vertex set $\{a,b,c,d\}$.

\[
 \begin{matrix}
 \setlength{\unitlength}{1cm}
\begin{picture}(0,2)

\put(-2,0){\line(1,0){4}}
\put(0,0){\line(0,1){2}}
\put(0,-.3){$a$}
\put(0,0){\circle*{.2}}
\put(-2.3,0){$b$}
\put(-2,0){\circle*{.2}}
\put(0,2.3){$c$}
\put(0,2){\circle*{.2}}
\put(2.3,0){$d$}
\put(2,0){\circle*{.2}}
\end{picture}
\\
 \end{matrix}
\]
\end{Example}

Consider the clutter $\C=\{abc, abd, acd\}$ of paths of length $2$ of $T$. In \cite{FaridiCM} Faridi examines the properties of the complex $\Delta$ whose facets are $\{a,b,c\}, \{a,b,d\}, \mbox{ and }\{a,c,d\}$ in order to determine the Cohen-Macaulayness of $I(\Delta)=I(\C)=(abc,abd,acd)$. Note that $\{a,b,c\}\cap\{a,b,d\}=\{ a,b\}$, $\{a,b,c\}\cap\{a,c,d\}=\{a,c\}$, and $\{a,b,d\}\cap\{a,c,d\}=\{a,d\}$, none of which are subsets of each other, so there can be no leaf in $\Delta$, hence $\Delta$ is not a simplicial tree. Notice also that  $c, \{a,b,c\}, b, \{a,b,d\}, d, \{a,c,d\}$ forms an odd cycle in the sense of \cite[Definition 2.11]{CF} and a special odd cycle as defined in \cite{HHTZ}.

Example~\ref{Tree not Simplicial} shows that not every subtree clutter of a tree, or indeed every path ideal of a tree, correpsonds to the facet ideal of a simplicial tree. We now show that the converse does hold. Thus the class of ideals corresponding to subtree clutters subsumes the class of simplicial trees. 

\begin{Theorem}\label{Simplicial Trees are Subtree Clutters}
Suppose $\Delta$ is a simplicial tree and $I=I(\Delta)$. Then $I$ is a subtree ideal of some tree $T$.
\end{Theorem}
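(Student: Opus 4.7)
The plan is to prove this by induction on the number $n$ of facets of $\Delta$. For the base case $n=1$, take $T$ to be any spanning tree of the complete graph on $V(F)$ (for instance, a Hamiltonian path); the unique facet then trivially induces $T$ itself as a subtree.

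For the inductive step, let $F$ be a leaf of $\Delta$ with joint $G$. A key preliminary observation is that $V_F := V(F) \setminus V(G)$ consists only of vertices lying in no facet of $\Delta$ other than $F$: if some $v \in V_F$ lay in a facet $H \neq F$, then $v \in V(F) \cap V(H) \subseteq V(F) \cap V(G)$, contradicting $v \notin V(G)$. Thus $\Delta' := \Delta \setminus \{F\}$ is a simplicial tree on $V(\Delta) \setminus V_F$, and by induction it admits a tree $T'$ in which every facet of $\Delta'$ is a subtree. I would then form $T$ from $T'$ by choosing a vertex $v_0 \in V(F) \cap V(G)$ and attaching the vertices of $V_F$ to $T'$ as a subtree anchored at $v_0$ (for instance, a star centered at $v_0$). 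Facets of $\Delta'$ remain subtrees of $T$ because the new edges are incident only to vertices of $V_F$, none of which lie in any facet of $\Delta'$.

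What remains is to ensure that $V(F)$ itself induces a subtree of $T$, which reduces to showing that $V(F) \cap V(G)$ is a connected subset of $T'$. This is the main obstacle, since a subset of the connected induced subgraph $T'|_{V(G)}$ need not itself be connected. To address it I would strengthen the inductive hypothesis by selecting $F$ at each step to be a \emph{good leaf} — a leaf whose intersections $\{V(F) \cap V(H) : H \neq F\}$ form a chain under inclusion — whose existence in any simplicial tree is guaranteed by Zheng's theorem on good leaf orderings. Processing the facets in a good leaf order, and at each step attaching the new free vertices at a vertex lying in the minimal set of the chain of intersections, one can maintain as a recursive invariant that every intersection $V(F_i) \cap V(F_j)$ that must be connected at a later stage already induces a subtree of the partial tree built so far. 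The nested structure of the intersections provided by the good-leaf property is precisely what allows this invariant to be propagated through the recursion; its careful verification is the principal technical step of the proof. An alternative avoiding Zheng's theorem is to induct on $|V(\Delta)|$, peeling off one free vertex of a leaf at a time and verifying that this operation preserves the simplicial tree property, thereby circumventing the nested-intersection bookkeeping entirely.
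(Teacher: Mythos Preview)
Your inductive setup and identification of the central obstacle --- that $V(F)\cap V(G)$ must already induce a subtree of $T'$ --- are exactly right, and match the paper's framing. However, neither of your two proposed resolutions is actually complete.

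For the good-leaf approach, you correctly note that the intersections $\{F_i\cap F_j: j>i\}$ form a chain, but this only constrains how $F_i$ meets \emph{later} facets. When you build the tree for $\langle F_{i+1},\ldots,F_n\rangle$ and then need $F_i\cap G_i$ to be a subtree, the chain condition on $F_i$'s intersections gives you no direct control over how $F_i\cap G_i$ sits inside the tree already constructed for $G_i$. You acknowledge that maintaining the required invariant is ``the principal technical step,'' but you do not state precisely what invariant you intend to carry, nor verify that attaching new vertices ``at a vertex lying in the minimal set of the chain'' actually preserves it. Without this, the argument is a plausible outline rather than a proof.

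The alternative approach --- peeling off one free vertex at a time --- does \emph{not} circumvent the obstacle as you claim. It works smoothly while $|V(F)\setminus V(G)|\geq 2$, since removing a single free vertex leaves $F\setminus\{v\}$ as a facet and the simplicial tree property is easily checked to persist. But when $|V(F)\setminus V(G)|=1$, removing the last free vertex $v$ collapses $F$ into $G$, so the facet disappears and you are reduced to $\Delta\setminus\{F\}$. To reattach $v$ and make $F=(F\cap G)\cup\{v\}$ a subtree, you again need $F\cap G$ to be connected in $T'$ --- the very obstacle you hoped to avoid.

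The paper resolves this obstacle by a genuinely different mechanism: rather than constraining the construction of $T'$ in advance, it builds any good spanning tree for $\Delta\setminus\{F\}$ and, if $F\cap G\cap T'$ is disconnected, \emph{modifies} $T'$ by an edge swap. The key point is that if no suitable swap existed, one could exhibit a special cycle in $\Delta$, contradicting \cite[Theorem~3.2]{HHTZ}. This tree-surgery argument, driven by the special-cycle-free characterization of simplicial trees, is the idea your proposal is missing.
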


\begin{proof}
If there exists a spanning tree $T$ of  the graph formed by the one-skeleton, $\Delta^1$, such that $T \cap H$ is connected for every facet $H$ of $\Delta$, then $I$ is a subtree ideal of $T$, where the subtrees are $H\cap T$ for each facet $H$ of $\Delta$. Such a spanning tree will be referred to as a good spanning tree. If $\Delta$ has a single facet, let $T$ be any spanning tree of $\Delta$. Then $I=I(\Delta)=(M)$ where $M$ is the monomial product of the vertices of $T$ and $T$ is a good spanning tree. Consider an arbitrary simplicial tree $\Delta$. By definition, there exists a leaf $F$ of $\Delta$ and a facet $G$ of $\Delta$ such that $F \cap H \subseteq F \cap G$ for every facet $H$ of $\Delta$. Since $\Delta$ is connected, $F \cap G \neq \emptyset$.  By induction on the number of facets, we may assume $\Delta \setminus \{F\}$ has a good spanning tree $T$. If $F \cap G \cap T$ is connected, select any vertex $x$ in $F \cap G$. For every vertex $y \in F \setminus \{G\}$, add the edge $\{x,y\}$ to $T$ to form a new tree $\widehat{T}$. Notice that $F \cap \widehat{T}$ is connected and $H \cap \widehat{T}=H \cap T$ for every facet $H\neq F$. Thus $\widehat{T}$ is a good spannng tree for $\Delta$. 

If $F\cap G \cap T$ is not connected, we will build a new good spanning tree $T'$ of $\Delta \setminus \{F\}$ such that $F\cap G \cap T'$ is connected, and then proof will follow as above. Assume there exist vertices $a,b$ in $F\cap G$ that are not connected in $F \cap G \cap T$.  Notice that $a$ and $b$ are connected in $G \cap T$, so there exists a path $e_1,e_2, \ldots,e_s$ in $G\cap T$ where $e_i=x_ix_{i+1}$ for some vertices $x_i$ with $a=x_1, \, b=x_{s+1}$. If $d(a,b)=1$, then $a,b$ are connected in $F \cap G \cap T$. Thus we may assume $s \geq 2$ and $d(a,b)$ is minimal among pairs of vertices not connected in $F\cap G \cap T$. By this minimality, $x_i \not \in F$ for $i \neq 1,s+1$.

Assume for every $i$ there exists a facet $H_i \neq G$ with $e_i \in H_i$ such that $H_i$ does not contain both $a$ and $b$. Note that $a \in H_1$ since $x_1=a$. Define $j_1=\max\{i \, | \, a \in H_i\}$ and $k_1=\max\{i \, | \, x_i \in H_{j_1}\}$. Since $b \in H_s$, $a \not\in H_s$, and $e_{j_1}=x_{j_1}x_{j_1+1} \in H_{j_1}$, then $1 \leq j_1 < k_1 < s$. Define $j_2=\max\{i\, | \, x_{k_1} \in H_i\}$ and $k_2=\max\{i \, | \, x_i \in H_{j_2}\}$. Then $j_1 < j_2 <k_2 \leq s+1$. Continue in this manner, until $k_t=s+1$ for some $t$.  By definition, $x_{k_p} \in H_{j_p}\cap H_{j_p+1}$ and $x_{k_p}\not\in H_{j_r}$ for $r \neq j_p,j_p+1$. Also, $a \in F \cap H_{j_p}$ precisely when $p=1$, $b \in F \cap H_{j_p}$ precisely when $p=t$ and $x_i \in F$ only when $x_i=a$ or $x_i=b$. Thus $a, H_{j_1} ,x_{k_1}, H_{j_2} , x_{k_2} ,\ldots , b, F$ is a special cycle of length at least three in $\Delta$. Thus by \cite[Theorem 3.2]{HHTZ} $\Delta$ is not a simplicial tree, a contradiction.  

Thus we may
assume there exists an $i$, such that for every facet $H$ of $\Delta$, if $e_i \in H$, then $a,b\in H$. Note that  $e_i \not\in H$ for every facet $H \neq G$ in $\Delta$ is possible. Define $T'=T\cup \{a,b\} \setminus e_i$.  Suppose $T'$ contains a cycle $C$. The cycle must contain the edge $\{a,b\}$ and cannot contain $e_i$. Thus $C\setminus \{a,b\}$ is a path in $T$ connecting $a$ to $b$ not involving $e_i$. But since $T$ is a tree, there is a unique path connecting $a$ to $b$, a contradiction. Thus $T'$ does not contain a cycle. If $u$ is any vertex, there is a unique path in $T$ between $u$ and $b$. If $e_i$ is not on this path, then the path survives in $T'$.  If $e_i$ is on this path, then $e_i$ is not on the unique path connecting $u$ to $a$. Then there is a unique path from $u$ to $b$ passing through $a$ in $T'$. Thus $u$ is connected to $b$ in $T'$ for every $u$. So $T'$ is a tree. If $e_i \not\in H$ for every facet $H \neq G$, then for every facet $H\neq G$ of $\Delta$,  $T\cap H=T' \cap H$ and so the monomial corresponding to $H$ corresponds to the subtree $H \cap T'$. If $u$ is any vertex of a facet $H$ that contains $a$ and $b$,  then $u$ is connected to $a$ and to $b$ by unique paths in $H \cap T$. Thus as before, $u$ is connected to $b$ in $H \cap T'$ and $T'$ is a good spanning tree of $\Delta \setminus \{F\}$. Notice that $e_i \not\in F \cap G \cap T$, so $F \cap G \cap T' = F \cap G \cap T \cup \{a,b\}$. 

If $F \cap G \cap T'$ is not connected, repeat this process with $T'$ playing the role of $T$. Notice that the edge $e_i$ that is removed in this process is by definition not contained in $F \cap G$, but the edge $\{a,b\}$ that is added is in $F \cap G$. Thus the process strictly increases the number of edges of $F \cap G \cap T$. Since there are finitely many vertices in $F \cap G$, this process is finite and must stop, at which point, $F \cap G \cap T$ is connected, as desired.
\end{proof}

An immediate application of Theorem~\ref{Simplicial Trees are Subtree Clutters} is that Theorem~\ref{Trees are Konig} extends a previously known result about properties of edge ideals of simplicial trees. 
\begin{Corollary}\cite[Theorem 5.3]{FaridiCM}
The edge ideal of a simplicial tree satisfies the K\"{o}nig property.
\end{Corollary}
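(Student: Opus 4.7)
The plan is to deduce this as an immediate corollary by chaining the two main results of the section. Given a simplicial tree $\Delta$, Theorem~\ref{Simplicial Trees are Subtree Clutters} produces a tree $T$ (a good spanning tree of $\Delta^1$) such that the facet ideal $I(\Delta)$ is realized as $I(\C)$, where $\C$ is the subtree clutter of $T$ whose edges are the intersections $H \cap T$ for each facet $H$ of $\Delta$. Because $T$ is a spanning tree of $\Delta^1$ we have $V(T) = V(\Delta)$, and since each $H \cap T$ is a connected subgraph of $T$ on the full vertex set $V(H)$, the monomial $\prod_{x \in H} x$ generating $I(\Delta)$ equals the monomial associated to the edge $H \cap T$ of $\C$.

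Next, since $T$ is a tree (and hence a forest with a single component), Theorem~\ref{Trees are Konig} applies to $\C$ and yields $\alpha_0(\C) = \beta_1(\C)$. The only remaining thing to check is that this equality is the K\"{o}nig property for $\Delta$ itself, which is essentially cosmetic: $\alpha_0$ is the height of the ideal and $\beta_1$ is the monomial grade, so both are algebraic invariants of $I(\Delta) = I(\C)$ and must agree for the two clutter presentations. Equivalently, two facets $H_1, H_2$ of $\Delta$ are disjoint if and only if the subtrees $H_1 \cap T$ and $H_2 \cap T$ are disjoint (their vertex sets are $V(H_1)$ and $V(H_2)$), so independent sets match up under $H \mapsto H \cap T$, and since the vertex sets coincide, minimal vertex covers of $\Delta$ and of $\C$ are literally the same.

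Since both ingredients are already in hand, there is no substantive obstacle; the argument is a direct reduction. The primary purpose of recording this corollary is to make explicit that Theorem~\ref{Trees are Konig}, via the representation result in Theorem~\ref{Simplicial Trees are Subtree Clutters}, genuinely recovers and extends \cite[Theorem 5.3]{FaridiCM}.
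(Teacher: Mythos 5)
Your proof is correct and is exactly the argument the paper intends: the corollary is presented as an immediate consequence of chaining Theorem~\ref{Simplicial Trees are Subtree Clutters} with Theorem~\ref{Trees are Konig}, and your extra remark that $\alpha_0$ and $\beta_1$ are invariants of the ideal $I(\Delta)=I(\C)$ (so the K\"{o}nig property transfers between the two clutter presentations) correctly fills in the only detail the paper leaves implicit.
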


It is interesting to note that the definition of a subtree ideal can be extended to graphs. If $G$ is any graph, a subtree ideal $I$ of $G$ is a square-free monomial ideal whose generators correspond to subtrees of $G$. Using this definition, it is easy to see that every square-free monomial ideal is a subtree ideal of a graph. For example, if $\Delta$ is the simplicial complex for which $I=I(\Delta)$, then $I$ is a subtree ideal of $\Delta^1$. Note that the graphs need not be unique, so this is not a one-to-one correspondence. However, it does provide a new combinatorial perspective from which to view square-free monomial ideals.

\section{Depths of Path Ideals of Spines}\label{Depths and Stanley}

Although identifying ideals that are Cohen-Macaulay is an important goal, it is also useful to know the depths of ideals that are not Cohen-Macaulay. In general, it can be quite difficult to determine the precise depths. In this section, we give an exact formula for the depth of a path ideal of a tree consisting of a single path. 
By noting that in this special case the directed path ideal is the same as the path ideal, and by using the Auslander-Buchsbaum formula, this depth recovers the projective dimension result of \cite[Theorem 4.1]{HeVanTuyl} which was also recovered in \cite[Corollary 5.1]{BouchatHaO'Keefe}. However, the method of proof allows us to extend the depth result to a bound on the Stanley depths of the ideals, as was done in \cite{StanleyDepthTree} for powers of edge ideals. This bound shows that these ideals are Stanley. 

The primary tool we will employ for computing depths is to form a family of short exact sequences  and then apply the Depth Lemma (see, for example,  \cite[Proposition 1.2.9]{BHer}, or \cite[Lemma 1.3.9]{monalg}). In particular, we will use that if 
$$0 \rightarrow A \rightarrow B \rightarrow C \rightarrow 0$$
is a short exact sequence of finitely generated $R$ modules with homogeneous maps and $\depth (C) > \depth (A)$, then $\depth (B)= \depth (A)$. 
Note that the method used in this section is a variation of the method used in \cite{HaM, morey}, where instead of using the left term of one sequence to form the subsequent sequence, the right hand term is used. Starting with the standard short exact sequence
$$0 \rightarrow R/(I : z) \overset{f}\rightarrow R/I \overset{g}\rightarrow R/(I, z) \rightarrow 0$$
and making judicious choices for $z\in R$, we form a family of sequences 
\begin{equation} \label{sequences}
\begin{matrix}
0 & \rightarrow & R/K_{1} & \rightarrow & R/I & \rightarrow & R/C_{1} & \rightarrow & 0 \\
& & \vdots & & \vdots & & \vdots & & \\
0 & \rightarrow & R/K_{i} & \rightarrow & R/C_{i-1} & \rightarrow & R/C_{i} & \rightarrow & 0 \\
& & \vdots & & \vdots & & \vdots & & \\
0 & \rightarrow & R/K_{s} & \rightarrow & R/C_{s-1} & \rightarrow & R/C_{s} & \rightarrow & 0 \\
\end{matrix}
\end{equation}
where 
$C_0=I$, $K_i=(C_{i-1}: z_i)$, and $C_i=(C_{i-1}, z_i)$ for $1\leq i\leq s$. 
The goal is to find bounds on the depths of $K_i$  for $1 \leq i \leq s$ and for $C_s$. Then applying the Depth Lemma starting with the last sequence and working back to the first will yield a bound on the depth of $R/I$. In this section, it will be easier to describe the sequence $\{ z_i \}$ using a double index, so the ideals playing the roles of $K_{i}$ and $C_{i}$ will be doubly indexed as well. 

A tree that does not branch is traditionally referred to as a {\it path}, however, to avoid the confusion of dealing with path ideals of paths, we will refer to such a graph as a spine. To be precise, we define a {\em spine} of length $n-1$ to be a set of $n$ distinct vertices $x_1, \ldots , x_n$ together with $n-1$ edges $x_ix_{i+1}$ for $1 \leq i \leq n-1$. We denote such a spine by $S_n$
and we will use $R=k[x_1, \ldots ,x_n]$ to denote the polynomial ring associated to $S_n$, or more generally, any graph on $n$ vertices. As subrings of $R$ will be used, define $R_t=k[x_1, \ldots , x_t]$ for $t \leq n$. While working with these ideals, it will often be convenient to work with subideals generated by selected paths. To facilitate this, define $P_{(\ell,1,s)}$ to be the ideal generated by the monomials corresponding to all paths of length $\ell$ of the spine connecting $x_1$ to $x_s$. For example, $P_{(2, 1,5)}=(x_1x_2x_3,x_2x_3x_4,x_3x_4x_5)$. Using this notation, $P_{\ell}(S_n)=P_{(\ell,1,n)}$.

\begin{Lemma} \label{lem:  Depth of Trivial Spines}
Let $S_n$ be a spine on $n$ vertices.  If $n \le \ell$, then $\depth (R/P_{(\ell,1,n)}) = n$.
\end{Lemma}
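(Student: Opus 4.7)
The plan is to observe that the hypothesis $n \le \ell$ forces the ideal $P_{(\ell,1,n)}$ to be the zero ideal, so the statement reduces to computing the depth of a polynomial ring. First I would note that by definition a path of length $\ell$ in a graph consists of $\ell+1$ distinct vertices joined by $\ell$ consecutive edges. The spine $S_n$ has only $n$ vertices, so if $n \le \ell$ then $n < \ell+1$ and no subset of the vertices of $S_n$ can support a path of length $\ell$.

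Consequently the generating set of $P_{(\ell,1,n)}$, which consists of the monomials corresponding to paths of length $\ell$ in $S_n$, is empty. Therefore $P_{(\ell,1,n)} = (0)$ and
\[
R/P_{(\ell,1,n)} \;=\; R \;=\; k[x_1,\dots,x_n].
\]
A polynomial ring in $n$ variables over a field is Cohen-Macaulay of Krull dimension $n$, so its depth with respect to the graded maximal ideal $\mathfrak{m}=(x_1,\dots,x_n)$ equals $n$; indeed, $x_1,\dots,x_n$ is a regular sequence on $R$ of length $n$. This gives $\depth(R/P_{(\ell,1,n)}) = n$, as claimed.

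There is essentially no obstacle here; the lemma is a trivial base case that pins down the behavior of $R/P_{(\ell,1,n)}$ when the spine is too short to admit any generators. I anticipate its only role is to anchor the inductive/recursive computation of depths for longer spines carried out later in the section, where the short exact sequences in (\ref{sequences}) terminate in quotients of this form.
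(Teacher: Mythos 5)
Your proof is correct and follows exactly the paper's argument: since a path of length $\ell$ requires $\ell+1 > n$ vertices, the ideal $P_{(\ell,1,n)}$ is zero and the depth is that of the polynomial ring $R$ itself, namely $n$. The additional justification you supply (the regular sequence $x_1,\dots,x_n$) is a harmless elaboration of the same reasoning.
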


\begin{proof}
As $\ell \ge n$ we see that $S_n$ does not contain a path of length $\ell$.  Thus $P_{(\ell,1,n)}=P_{\ell}(S) = (0)$ and we have $\depth (R/P_{(\ell,1,n)}) = \depth (R/(0)) = \depth (R) = n$.
\end{proof}

We now fix $\ell$ and $n$. In order to define the monomials that will serve the role of $z_i$ above, it is useful apply the division algorithm to produce unique integers $b$ and $c$ with $0 \leq c < \ell+2$ and  $n-\ell-1=b(\ell+2) +c$. It will often be convenient to write $n= (\ell+1) +b(\ell+2) +c$ throughout the paper. For $1 \leq c \leq \ell+1$, define a sequence $\{ a_{(j,k)}\}$  by $a_{(j,k)}=\prod \limits_{t=n-\ell-k+1}^{n-j-k+1}x_t$ for $1 \leq j \leq \min\{ c, \ell\}$ and $1 \leq k \leq c-j+1$. 
Note that for $c=0$, the sequence is defined to be empty.

\begin{Example} \label{exm:  Spine Sequence Example Pt 1}
Suppose $n=18$ and $\ell=6$.  We then have $b=1$ and $c=3$ so our sequence of monomials $\{a_{(j,k)}\}$ is: 
$$ a_{(1,1)}=x_{12}x_{13} x_{14} x_{15} x_{16} x_{17}, \, a_{(1,2)}=x_{11}x_{12} x_{13} x_{14} x_{15} x_{16}, \, a_{(1,3)}=x_{10}x_{11} x_{12} x_{13} x_{14} x_{15}, $$
$$a_{(2,1)}=x_{12}x_{13} x_{14} x_{15} x_{16}, \, a_{(2,2)}=x_{11}x_{12} x_{13} x_{14} x_{15}, \, a_{(3,1)}=x_{12}x_{13} x_{14} x_{15}
$$
\end{Example}

Using this sequence, we now define the ideals that will play the roles of $C_i$ and $K_i$ in the sequences above. Notice that since the sequence used is doubly indexed, the ideals $C_i$ and $K_i$ will require double indices as well, with the same ranges on the indices as above. We first define the ideals $C_{(j,k)}=(I, a_{(1,1)}, a_{(1,2)}, \ldots, a_{(j,k)})$.
Note that for $c=0$, the sequence was defined to be empty, and the only ideal defined is $C_{(0,k)}=P_{(\ell,1,n)}$ for all $k$. In general, the sequence of $a_{(j,k)}$ was selected so that many of the terms of $C_{(j,k)}=(I, a_{(1,1)}, a_{(1,2)}, \ldots, a_{(j,k)})$ will be redundant. 

Next we define the ideals $K_{(j,k)}$, with the same bounds on $j,k$ as before, by
\begin{equation} \label{eqn:  Definition of Family K}
K_{(j,k)} = \left\{ \begin{array}{ll}
    (C_{(j-1,c-(j-1)+1)}: a_{(j,1)}) & \text{ if } k=1 \\
	(C_{(j,k-1)}: a_{(j,k)}) & \text{ if } k>1
	\end{array}
	\right.
\end{equation}
Notice that each $K_{(j,k)}$ is formed by taking the quotient ideal of the next term in the sequence with the preceeding $C$ ideal.  It is straightforward to obtain an explicit formula for $K_{(j,k)}$. The selection of the sequence $a_{(j,k)}$ was designed so that these quotient ideals will each have two elements of degree one, and these elements will make all paths of length less than $\ell$ redundant as generators.

\begin{Proposition} \label{prp:  Explicit Form of K}
The family of ideals $K_{(j,k)}$ has the following explicit formulation:
\begin{equation} \label{eqn:  Explicit Form of K}
K_{(j,k)} = (P_{(\ell,1,n-\ell-k-1)},x_{n-\ell-k},x_{n-j-k+2})
\end{equation}
\end{Proposition}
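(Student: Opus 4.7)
The plan is to compute $K_{(j,k)}$ directly from its defining colon. Because $C_{(j,k-1)}$ is a monomial ideal and the colon of a monomial ideal by a monomial is generated by the quotient of each generator by the appropriate gcd, $K_{(j,k)}$ is generated by the set $\{M/\gcd(M, a_{(j,k)}) : M \in \mathcal{G}\}$, where $\mathcal{G}$ consists of the path generators $g_i = x_i x_{i+1} \cdots x_{i+\ell}$ of $I$ (for $1 \le i \le n-\ell$) together with the previously-adjoined monomials $a_{(r,s)}$ with $(r,s)$ lex-smaller than $(j,k-1)$. The case $k=1$ is entirely analogous, with $C_{(j-1, c-j+2)}$ in place of $C_{(j,k-1)}$. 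The proof splits into two tasks: verifying each of $P_{(\ell, 1, n-\ell-k-1)}$, $x_{n-\ell-k}$, and $x_{n-j-k+2}$ arises as such a quotient, and conversely showing every such quotient lies in the claimed ideal.

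For the inclusion $\supseteq$, I would exhibit explicit generators of $C$ realizing each piece of the right-hand side. Any path $g_i$ generating $P_{(\ell, 1, n-\ell-k-1)}$ has support $[i, i+\ell]$ with $i+\ell \le n-\ell-k-1$, hence disjoint from the support $[n-\ell-k+1, n-j-k+1]$ of $a_{(j,k)}$, so the quotient is $g_i$ itself. To realize $x_{n-\ell-k}$, I would check by direct support comparison that $x_{n-\ell-k}\cdot a_{(j,k)}$ is already a generator of $C$: for $j \ge 2$ it coincides with $a_{(j-1,k+1)}$, which is lex-earlier than $a_{(j,k)}$ in the valid index range, and for $j=1$ it coincides with the path $g_{n-\ell-k}$. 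Similarly, $x_{n-j-k+2}$ appears as the colon ratio from $a_{(j,k-1)}$ when $k>1$ (since $a_{(j,k-1)}$ and $a_{(j,k)}$ differ by a single-position left-shift, their colon ratio equals exactly $x_{n-j-k+2}$), from $a_{(j-1,1)}$ when $k=1$ and $j \ge 2$, and from $g_{n-\ell} = a_{(1,1)}\cdot x_n$ in the base case $(j,k)=(1,1)$.

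For the reverse inclusion $\subseteq$, I would run a case analysis on each $M \in \mathcal{G}$. For a path $M = g_i$, the quotient $g_i/\gcd(g_i, a_{(j,k)})$ is the product of the $x_t$ over $t$ in the set-theoretic difference $[i, i+\ell]\setminus[n-\ell-k+1, n-j-k+1]$; when the two intervals are disjoint the quotient is $g_i$ itself (which is then either a generator of $P_{(\ell, 1, n-\ell-k-1)}$, or contains $x_{n-\ell-k}$ because $i + \ell = n - \ell - k$, or contains $x_{n-j-k+2}$ because $i \ge n-j-k+2$), and when they overlap the quotient is the concatenation of a (possibly empty) left tail ending at $x_{n-\ell-k}$ and a right tail beginning at $x_{n-j-k+2}$, at least one of which is nonempty. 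The same interval-splitting analysis applies to $M = a_{(r,s)}$ with $(r,s)$ lex-earlier than $(j,k)$: the quotient consists of the variables of $a_{(r,s)}$ outside the support of $a_{(j,k)}$, and in each configuration these include $x_{n-\ell-k}$ or $x_{n-j-k+2}$.

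The main obstacle will be organizing the case analysis for the $a_{(r,s)}$ generators, since the lex condition allows the relative position of $s$ to $k$ to vary (less than, equal to, or greater than), and each configuration must be separately verified. The numerical constraints $1 \le j \le \min(c,\ell)$ and $1 \le k \le c-j+1$ coming from the decomposition $n = (\ell+1) + b(\ell+2) + c$ force $j+k \le c+1 \le \ell+2$, which is precisely the bound needed to exclude the problematic configurations; the sole extreme case $j+k = \ell+2$ with $s = 1$ is handled by the direct observation that then $x_{n-j-k+2} = x_{n-\ell}$ lies in the support of $a_{(j,1)}$, making that quotient visibly divisible by $x_{n-j-k+2}$.
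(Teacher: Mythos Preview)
Your proposal is correct and follows essentially the same approach as the paper: both compute the colon ideal directly, exhibiting the same witnesses ($g_{n-\ell-k}$ or $a_{(j-1,k+1)}$ for $x_{n-\ell-k}$, and $a_{(j,k-1)}$, $a_{(j-1,k)}$, or $g_{n-\ell}$ for $x_{n-j-k+2}$) for the inclusion $\supseteq$. The paper dismisses the reverse inclusion as ``straightforward,'' whereas you spell out the interval case analysis; your observation that the constraint $j+k\le c+1\le \ell+2$ rules out the problematic configurations is exactly what makes that case analysis close, and is implicit in the paper's terse treatment.
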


\begin{proof}
First notice that for $1 \leq k \leq c$, both $x_{n-\ell-k}a_{(1,k)}$ and $x_{n-k+1}a_{(1,k)}$ are generators of $I=P_{(\ell,1,n)}$,  so $(C_{(1,k-1)}, x_{n-\ell-k}, x_{n-k+1}) \subseteq (C_{(1,k-1)}: a_{(j,k)})$, where $C_{(1,0)}=P_{(\ell,1,n)}$. The other inclusion is straightforward, so removing redundant elements from the list of generators yields the desired result for $j=1$. For $j\geq 2$, first notice that $(a_{(j-1,k+1)} : a_{(j,k)}) = (x_{n-\ell-k})$ and $( a_{(j-1,k)}: a_{(j,k)}) = (x_{n-j-k+2})$. Then the result follows similarly.
\end{proof}

Given this explicit form for $K_{(j,k)}$, it is easy to see that the depth of $K_{(j,k)}$ can be found inductively from the depth of the path ideal of a shorter spine. Thus the Lemma below will allow us to simultaneously control the depth of each of the left hand terms of the series of sequences. The proof is a direct application of  \cite[Lemma 2.2]{morey} and thus is omitted. 

\begin{Lemma} \label{lem:  Isomorphism for K}
For all $j$ and $k$, $\depth (R/K_{(j,k)}) = \depth (R_{n-\ell-k-1}/P_{(\ell,1,n-\ell-k-1)}) + \ell + k - 1$.
\end{Lemma}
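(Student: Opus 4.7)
The plan is to use the explicit description of $K_{(j,k)}$ provided by Proposition~\ref{prp: Explicit Form of K} to present $R/K_{(j,k)}$ as a polynomial extension of $R_{n-\ell-k-1}/P_{(\ell,1,n-\ell-k-1)}$, after which the depth formula follows from the standard behavior of depth under adjunction of free variables. Since the proof of the explicit form has already been done, the remaining work is essentially a clean bookkeeping of which variables occur where.

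First, I would invoke Proposition~\ref{prp: Explicit Form of K} to write
\[
K_{(j,k)} \;=\; (P_{(\ell,1,n-\ell-k-1)},\,x_{n-\ell-k},\,x_{n-j-k+2}).
\]
The ideal $P_{(\ell,1,n-\ell-k-1)}$ is generated by paths supported on $\{x_1,\dots,x_{n-\ell-k-1}\}$, so it lives inside $R_{n-\ell-k-1}$. Since $1 \le j \le \min\{c,\ell\}$ we have $j \le \ell$, which forces $n-j-k+2 > n-\ell-k$, so the two linear generators $x_{n-\ell-k}$ and $x_{n-j-k+2}$ are distinct variables, and both are indexed strictly above $n-\ell-k-1$, hence disjoint from the support of $P_{(\ell,1,n-\ell-k-1)}$.

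Next, killing the two linear forms $x_{n-\ell-k}$ and $x_{n-j-k+2}$ gives a natural isomorphism
\[
R/K_{(j,k)} \;\cong\; \bigl(R_{n-\ell-k-1}/P_{(\ell,1,n-\ell-k-1)}\bigr)\bigl[\,x_i : n-\ell-k-1 < i \le n,\ i \ne n-\ell-k,\ i \ne n-j-k+2\,\bigr].
\]
A count of the adjoined variables gives $n - (n-\ell-k-1) - 2 = \ell + k - 1$ free indeterminates. Appealing to the standard fact that $\depth(S[y_1,\dots,y_r]) = \depth(S) + r$ for any Noetherian local or graded ring $S$ (see, e.g., \cite[Lemma 2.2]{morey}, which packages exactly this observation), we obtain
\[
\depth(R/K_{(j,k)}) \;=\; \depth\bigl(R_{n-\ell-k-1}/P_{(\ell,1,n-\ell-k-1)}\bigr) + (\ell + k - 1),
\]
as claimed. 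There is no real obstacle here; the only point requiring a moment's care is verifying that the indices $n-\ell-k$ and $n-j-k+2$ are strictly greater than $n-\ell-k-1$ and strictly at most $n$, so that the two linear generators are genuinely ``new'' variables relative to the support of $P_{(\ell,1,n-\ell-k-1)}$, and are distinct from one another. Both verifications are immediate from the ranges $1 \le j \le \min\{c,\ell\}$ and $1 \le k \le c-j+1$ combined with $c < \ell + 2$.
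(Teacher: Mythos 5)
Your proposal is correct and matches the paper's intent exactly: the paper omits the proof, stating only that it is ``a direct application of \cite[Lemma 2.2]{morey}'' to the explicit form of $K_{(j,k)}$ from Proposition~\ref{prp:  Explicit Form of K}, which is precisely the presentation of $R/K_{(j,k)}$ as a polynomial extension in $\ell+k-1$ variables that you carry out. Your index checks and the variable count $n-(n-\ell-k-1)-2=\ell+k-1$ are all accurate.
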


We now need to control the depth of the final term of the final sequence. The nature of this proof will allow us to simultaneously handle the case $c=0$, which was omitted above. For convenience, we will denote the final $C_{(j,k)}$ by $I_{(1)}$ and the final $a_{(j,k)}$ by $a_{(1)}$ since the final values of $j$ and $k$ depend on the relationship between $c$ and $\ell$. Explicitly, define

\begin{minipage}[c]{2.5in}
$$I_{(1)}=\left\{ \begin{array}{ll}
I & \text{ if } c=0 \\
I_{(c,1)} & \text{ if }1 \leq c \leq \ell\\
I_{(\ell,2)} & \text{ if } c=\ell+1
\end{array}
\right.$$
\end{minipage}
\hspace{.5in}
\begin{minipage}[c]{2.5in}
$$a_{(1)}=\left\{ \begin{array}{ll}
a_{(c,1)} & \text{ if }1 \leq c \leq \ell\\
a_{(\ell,2)} & \text{ if } c=\ell+1
\end{array}
\right.$$
\end{minipage}

The first two cases to consider follow directly from the definition of $C_{(j,k)}$ and an application of  \cite[Lemma 2.2]{morey}. 

\begin{Lemma} \label{lem:  Isomorphism for I with c=l}
If $c=\ell$, then $\depth (R/I_{(1)}) = \depth (R_{n-\ell-1}/P_{(\ell,1,n-\ell-1)}) + \ell$.
\end{Lemma}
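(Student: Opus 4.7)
The plan is to reduce $I_{(1)}$ to a transparently described ideal and then invoke \cite[Lemma 2.2]{morey}. When $c=\ell$, the index set for the sequence $\{a_{(j,k)}\}$ consists of all pairs with $1\le j\le \ell$ and $1\le k\le \ell-j+1$, and by definition $I_{(1)}=C_{(\ell,1)}$ is obtained by adjoining all these monomials to $I=P_{(\ell,1,n)}$. The terminal generator is $a_{(\ell,1)}=x_{n-\ell}$.

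First I would observe that every monomial on the list is divisible by $x_{n-\ell}$. Indeed, $a_{(j,k)}$ is the product of the variables with indices from $n-\ell-k+1$ to $n-j-k+1$, and the constraint $k\le \ell-j+1$ rewrites as $j+k\le \ell+1$, which guarantees $n-\ell-k+1 \le n-\ell \le n-j-k+1$. Therefore each $a_{(j,k)}$ is redundant modulo the terminal generator $x_{n-\ell}$, and so $I_{(1)}=(P_{(\ell,1,n)},x_{n-\ell})$.

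Next I would examine which generators of $P_{(\ell,1,n)}$ survive modulo $x_{n-\ell}$. Any length-$\ell$ path meeting $x_{n-\ell}$ is absorbed into $(x_{n-\ell})$, so only paths avoiding $x_{n-\ell}$ remain. Such a path lies entirely in $\{x_1,\ldots,x_{n-\ell-1}\}$ or entirely in $\{x_{n-\ell+1},\ldots,x_n\}$. The latter set has only $\ell$ vertices, insufficient for a length-$\ell$ path, so $I_{(1)}=(P_{(\ell,1,n-\ell-1)},x_{n-\ell})$.

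Finally, since the generators of $P_{(\ell,1,n-\ell-1)}$ involve only $x_1,\ldots,x_{n-\ell-1}$ and the variables $x_{n-\ell+1},\ldots,x_n$ do not appear in $I_{(1)}$, one application of \cite[Lemma 2.2]{morey} gives $R/I_{(1)}\cong (R_{n-\ell-1}/P_{(\ell,1,n-\ell-1)})\otimes_k k[x_{n-\ell+1},\ldots,x_n]$, and the $\ell$ extra polynomial variables contribute $\ell$ to the depth, yielding the claimed equality. The only delicate point is the collapse step; once the numerical inequality $j+k\le \ell+1$ is pinned down, everything else is routine.
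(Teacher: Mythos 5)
Your proof is correct and follows essentially the same route as the paper's: both arguments show that every $a_{(j,k)}$ is divisible by the terminal generator $a_{(\ell,1)}=x_{n-\ell}$ (via the inequality $j+k\le \ell+1$), collapse $I_{(1)}$ to $(P_{(\ell,1,n-\ell-1)},x_{n-\ell})$, and conclude with \cite[Lemma 2.2]{morey}. Your write-up merely makes explicit the intermediate step that no length-$\ell$ path fits in the $\ell$ vertices to the right of $x_{n-\ell}$, which the paper leaves implicit.
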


\begin{proof}
Notice that when $c=\ell$, $a_{(c,1)}=x_{n-\ell}$. Also note that $n-\ell-k+1 \leq n-\ell$ and since $k \leq c-j+1, \, n-j-k+1 \geq n-\ell$ when $c=\ell$. Thus $a_{(j,k)}=\prod \limits_{t=n-\ell-k+1}^{n-j-k+1}x_t$ is a multiple of $x_{n-\ell}$ for all $j,k$ when $c=\ell$. Thus $I_{(1)}=C_{(c,1)}=(P_{(\ell,1,n-\ell-1)},x_{n-\ell})$ and the result follows from \cite[Lemma 2.2]{morey}.
\end{proof}

\begin{Lemma} \label{lem:  Isomorphism for I with c=l+1}
If $c=\ell+1$, then $\depth (R/I_{(1)}) = \depth (R_{n-\ell-2}/P_{(\ell,1,n-\ell-2)}) + \ell + 1$.
\end{Lemma}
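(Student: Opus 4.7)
The plan is to mirror the proof of Lemma~\ref{lem:  Isomorphism for I with c=l}, adapting it to the case $c = \ell + 1$. The essential observation is that in this case both $a_{(\ell, 1)}$ and $a_{(\ell, 2)}$ are linear forms: plugging into the formula $a_{(j,k)} = \prod_{t=n-\ell-k+1}^{n-j-k+1} x_t$ with $j = \ell$ and $k = 1, 2$ yields $a_{(\ell, 1)} = x_{n-\ell}$ and $a_{(\ell, 2)} = x_{n-\ell-1}$. Since $C_{(\ell,2)}$ lists these two variables among its generators, one expects all other $a_{(j,k)}$ to be absorbed.

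The first key step is verifying exactly this absorption. A direct reading of the product formula shows that $x_{n-\ell} \mid a_{(j,k)}$ iff $k \geq 1$ and $j + k \leq \ell + 1$, while $x_{n-\ell-1} \mid a_{(j,k)}$ iff $k \geq 2$ and $j + k \leq \ell + 2$. For $c = \ell + 1$ the index set satisfies $1 \leq j \leq \ell$ and $1 \leq k \leq \ell - j + 2$, so $j + k \leq \ell + 2$ and every pair $(j, k)$ falls into one of the two cases above. Hence the redundant generators collapse and
\[
C_{(\ell, 2)} = \bigl(P_{(\ell, 1, n)},\, x_{n-\ell-1},\, x_{n-\ell}\bigr).
\]

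The second step is to discard the now-redundant path generators. A minimal generator $x_i x_{i+1} \cdots x_{i+\ell}$ of $P_{(\ell,1,n)}$ either meets the set $\{x_{n-\ell-1}, x_{n-\ell}\}$ (in which case it is absorbed) or satisfies $i + \ell < n - \ell - 1$, and these leftover paths are precisely the minimal generators of $P_{(\ell,1,n-\ell-2)}$ viewed inside $R_{n-\ell-2}$. Thus
\[
I_{(1)} = \bigl(P_{(\ell, 1, n-\ell-2)},\, x_{n-\ell-1},\, x_{n-\ell}\bigr),
\]
where the path part lies in the subring on $x_1, \ldots, x_{n-\ell-2}$, the variables $x_{n-\ell-1}, x_{n-\ell}$ are disjoint from those, and the variables $x_{n-\ell+1}, \ldots, x_n$ appear nowhere in the ideal.

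From this decomposition the claimed depth identity follows by invoking \cite[Lemma 2.2]{morey}, applied once to absorb the two linear forms into a subring isomorphism and then to account for the free polynomial variables. The main obstacle is the combinatorial bookkeeping of the first step, namely checking uniformly over the rectangular index set that the two single-variable generators $x_{n-\ell-1}$ and $x_{n-\ell}$ really do absorb every other $a_{(j,k)}$; once this is done, the rest is a careful tally of free variables yielding the stated additive constant $\ell + 1$.
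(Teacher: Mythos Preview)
Your approach is exactly the paper's: identify $a_{(\ell,1)}=x_{n-\ell}$ and $a_{(\ell,2)}=x_{n-\ell-1}$, check that every $a_{(j,k)}$ is a multiple of one of them, simplify $I_{(1)}$ to $(P_{(\ell,1,n-\ell-2)},\,x_{n-\ell-1},\,x_{n-\ell})$, and invoke \cite[Lemma~2.2]{morey}. Your divisibility case analysis is more explicit than the paper's one-line assertion, and it is correct.

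The gap is in the step you explicitly flag as ``a careful tally'' but do not carry out. Do it: $R$ has $n$ variables; quotienting by $x_{n-\ell-1}$ and $x_{n-\ell}$ leaves $n-2$; the path ideal $P_{(\ell,1,n-\ell-2)}$ involves $x_1,\ldots,x_{n-\ell-2}$; the remaining free variables are $x_{n-\ell+1},\ldots,x_n$, and there are $\ell$ of them, not $\ell+1$. So the argument actually produces
\[
\depth(R/I_{(1)}) \;=\; \depth\bigl(R_{n-\ell-2}/P_{(\ell,1,n-\ell-2)}\bigr) + \ell.
\]
A concrete check: take $\ell=2$, $n=6$ (so $c=\ell+1=3$). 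Then $I_{(1)}=(x_3,x_4)$ and $\depth(R/I_{(1)})=4$, while $R_{2}/P_{(2,1,2)}=k[x_1,x_2]$ has depth $2$; the difference is $2=\ell$, not $3=\ell+1$.

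In other words, the constant $\ell+1$ in the stated lemma appears to be a misprint for $\ell$, and the paper's own terse proof does not catch it. Your write-up simply asserts the stated constant rather than verifying it. This does not ultimately damage Theorem~\ref{Depth of Spines}: with the corrected value one gets $\depth(R/I_{(1)})=\ell(b+1)+c-1$, equal to $\depth(R/K_{(j,k)})$, and the full Depth Lemma (using $\depth A \geq \min\{\depth B,\depth C+1\}$ together with $\depth B \geq \min\{\depth A,\depth C\}$) still forces $\depth(R/C_{(\ell,1)})=\ell(b+1)+c-1$, so the induction proceeds. But as written, your final sentence claims a count that, if actually performed, contradicts the statement you are trying to prove.
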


\begin{proof}
Notice that when $c=\ell+1$, $a_{(\ell,1)}=x_{n-\ell}$ and $a_{(\ell,2)}=x_{n-\ell-1}$. As before, $a_{(j,k)}$ is a multiple of $x_{n-\ell}$ or of $x_{n-\ell-1}$ for all $j,k$, and thus the result follows from \cite[Lemma 2.2]{morey}. 
\end{proof}

Finding the depth of $I_{(1)}$ for $0 \leq c \leq \ell-1$ will require another family of short exact sequences. Define a sequence of monomials by $b_{(h)}= \prod_{t=n-\ell+h}^{n-c}x_t$ for $1 \leq h \leq \ell-c$. 

\begin{Example} \label{exm:  Spine Sequence Example Pt 2}
As in Example~\ref{exm:  Spine Sequence Example Pt 1} assume $n = 18$, $\ell = 6$, $b = 1$, and $c = 3$.  Then $\{b_{(h)}\} =\{ x_{13} x_{14} x_{15}, \, x_{14}x_{15}, \, x_{15}\}$.
\end{Example}

We again form a family of short exact sequences using the sequence $\{b_{(h)}\}$. For convenience, define $J_{(0)}=I_{(1)}$. Now define $J_{(h)}$ and $L_{(h)}$ by $J_{(h)}=(J_{(h-1)},b_{(h)})$ and $L_{(h)}=(J_{(h-1)}:b_{(h)})$. Then as in \eqref{sequences} we have the following family of short exact sequences.

\begin{equation} \label{eqn:  2nd Family of SES's}
\begin{matrix}
0 & \rightarrow & R/L_{(1)} & \rightarrow & R/I_{(1)} & \rightarrow & R/J_{(1)} & \rightarrow & 0 \\
0 & \rightarrow & R/L_{(2)} & \rightarrow & R/J_{(1)} & \rightarrow & R/J_{(2)} & \rightarrow & 0 \\
0 & \rightarrow & R/L_{(3)} & \rightarrow & R/J_{(2)} & \rightarrow & R/J_{(3)} & \rightarrow & 0 \\
& & \vdots & & \vdots & & \vdots & & \\
0 & \rightarrow & R/L_{(l-c-1)} & \rightarrow & R/J_{(l-c-2)} & \rightarrow & R/J_{(l-c-1)} & \rightarrow & 0 \\
0 & \rightarrow & R/L_{(l-c)} & \rightarrow & R/J_{(l-c-1)} & \rightarrow & R/J_{(l-c)} & \rightarrow & 0 \\
\end{matrix}
\end{equation}

Note that for each $h$, $b_{(h)}=x_{n-\ell+h} b_{(h+1)}$ and $a_{(1)}=x_{n-\ell}b_{(1)}$ where $a_{(1)}$ is the final term for the original sequence when $c>0$ and $a_{(1)}= \prod \limits_{t=n-\ell}^{n}x_t$ is the last generator of $I$  when $c=0$. Removing redundant elements from the generating set yields $J_{(\ell-c)} = (P_{(\ell,1,n-c-1)},x_{n-c})$ and $L_{(h)} =(P_{(\ell,1,n-\ell+h-2)},x_{n-\ell+h-1})$. Using these explicit forms of $J_{(\ell-c)}$ and $L_{(h)}$, combined with  \cite[Lemma 2.2]{morey}, we are able to express the depths of all of the left hand terms and the final right hand term of the sequences in \eqref{eqn: 2nd Family of SES's} in terms of the depths of path ideals of shorter spines. Note that by the definition of $b_h$, we will assume $c \leq \ell-1$ whenever we are dealing with $J_{(h)}$ or $L_{(h)}$.

\begin{Lemma} \label{depthLJ}
For all $h$, $\depth (R/L_{(h)}) = \depth (R_{n-\ell+h-2}/P_{(\ell,1,n-\ell+h-2)}) + \ell - h + 1$ and $\depth (R/J_{(\ell-c)}) = \depth (R_{n-c-1}/P_{(\ell,1,n-c-1)}) + c$.
\end{Lemma}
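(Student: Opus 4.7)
The plan is to reduce each depth computation to one for a path ideal on a shorter spine by applying \cite[Lemma 2.2]{morey} to the explicit forms of $L_{(h)}$ and $J_{(\ell-c)}$ that were just recorded. In both cases the ideal has the shape $(P_{(\ell,1,m)}, x_{m+1}) \subset R = k[x_1,\dots,x_n]$, where the generators of $P_{(\ell,1,m)}$ live in $R_m = k[x_1,\dots,x_m]$, so the variables $x_{m+2}, \dots, x_n$ appear in no generator. This is precisely the setup of \cite[Lemma 2.2]{morey}, which was already used in the proofs of Lemma~\ref{lem:  Isomorphism for K}, Lemma~\ref{lem:  Isomorphism for I with c=l}, and Lemma~\ref{lem:  Isomorphism for I with c=l+1}. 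From that lemma one obtains $\depth(R/(P_{(\ell,1,m)}, x_{m+1})) = \depth(R_m/P_{(\ell,1,m)}) + (n - m - 1)$; setting $m = n-\ell+h-2$ yields $n-m-1 = \ell - h + 1$ for $L_{(h)}$, and setting $m = n-c-1$ yields $n-m-1 = c$ for $J_{(\ell-c)}$, which are exactly the claimed formulas.

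Before invoking the lemma, I would briefly justify the explicit forms of $L_{(h)}$ and $J_{(\ell-c)}$ stated in the paragraph preceding the lemma. For $J_{(\ell-c)}$, the relation $b_{(h)} = x_{n-\ell+h}\,b_{(h+1)}$ implies every $b_{(h)}$ is a multiple of $b_{(\ell-c)} = x_{n-c}$, and $a_{(1)} = x_{n-\ell}\,b_{(1)}$ is likewise a multiple of $x_{n-c}$. Every generator of $I_{(1)}$ that is a path of length $\ell$ meeting $\{x_{n-c},\dots,x_n\}$ thus becomes redundant once $x_{n-c}$ is adjoined, leaving precisely $(P_{(\ell,1,n-c-1)}, x_{n-c})$. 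For $L_{(h)} = (J_{(h-1)} : b_{(h)})$, the divisibility $b_{(h)} \mid b_{(i)}$ for $i \leq h-1$ gives $(b_{(i)} : b_{(h)}) = b_{(i)}/b_{(h)} = \prod_{t=n-\ell+i}^{n-\ell+h-1} x_t$; the case $i = h-1$ produces the single variable $x_{n-\ell+h-1}$, and all other colon quotients are multiples of it. A short inspection of the remaining generators of $I_{(1)}$ and their colons with $b_{(h)}$ then shows that after adjoining $x_{n-\ell+h-1}$, the only surviving minimal generators beyond $x_{n-\ell+h-1}$ are the paths of length $\ell$ contained in $\{x_1,\dots,x_{n-\ell+h-2}\}$, as desired.

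The main obstacle is the bookkeeping in the colon calculation, in particular tracking which path generators of $I_{(1)}$ become redundant after forming $J_{(h-1)}$ and then taking the colon with $b_{(h)}$; however, the monomials $a_{(j,k)}$ and $b_{(h)}$ were chosen precisely so that at each stage a single new linear form collapses the path ideal to one on a strictly shorter spine. Once the explicit presentations of $L_{(h)}$ and $J_{(\ell-c)}$ are secured, the depth formulas follow immediately from \cite[Lemma 2.2]{morey}, in direct parallel with the earlier isomorphism lemmas in this section.
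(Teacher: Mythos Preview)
Your proposal is correct and follows exactly the route the paper takes: the paper does not even include a formal proof of this lemma, instead stating just before it that the explicit forms $J_{(\ell-c)} = (P_{(\ell,1,n-c-1)},x_{n-c})$ and $L_{(h)} =(P_{(\ell,1,n-\ell+h-2)},x_{n-\ell+h-1})$ combined with \cite[Lemma 2.2]{morey} give the result, which is precisely your argument. Your additional verification of the explicit forms goes beyond what the paper spells out; note only that when justifying $J_{(\ell-c)}$ you should observe that \emph{every} $a_{(j,k)}$ (not just $a_{(1)}$) is divisible by $x_{n-c}$, since $j+k-1 \le c \le \ell+k-1$ forces $n-\ell-k+1 \le n-c \le n-j-k+1$.
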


We are now able to prove the main result regarding the depth of a path ideal of a spine.

\begin{Theorem} \label{Depth of Spines}
Let $S$ be a spine of $n$ vertices.  Then 
$$\depth (R/P_\ell(S))=\depth (R/P_{(\ell,1,n)}) = \left\{ \begin{array}{ll}
\ell(b+1) & \text{ if } c=0 \\
\ell(b+1)+c-1 & \text{ if } c>0
\end{array}
\right. .$$
\end{Theorem}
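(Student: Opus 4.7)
The plan is to prove the formula by induction on $n$, using the two families of short exact sequences~\eqref{sequences} and~\eqref{eqn:  2nd Family of SES's} together with the Depth Lemma. The base case $n \leq \ell$ is Lemma~\ref{lem:  Depth of Trivial Spines}, and the smallest case covered by the formula itself, namely $n = \ell+1$ with $b = c = 0$, is immediate since $P_{(\ell,1,n)} = (x_1 \cdots x_{\ell+1})$ is principal, so $\depth(R/P_{(\ell,1,n)}) = n-1 = \ell = \ell(b+1)$. For the inductive step, let $d$ denote the claimed depth, that is $d = \ell(b+1)$ if $c = 0$ and $d = \ell(b+1)+c-1$ if $c > 0$.

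The first task is to compute the depth of every side term appearing in the two families of sequences. Using Lemmas~\ref{lem:  Isomorphism for K} and~\ref{depthLJ} together with the inductive hypothesis (or Lemma~\ref{lem:  Depth of Trivial Spines} when the shorter spine has fewer than $\ell+1$ vertices), one writes the shorter length $n'$ in the form $(\ell+1) + b'(\ell+2) + c'$, substitutes into the appropriate depth formula, and adds the correction term $\ell+k-1$ or $\ell-h+1$. A case-by-case verification produces $\depth(R/K_{(j,k)}) = d$ for every admissible pair $(j,k)$, and $\depth(R/L_{(h)}) = \depth(R/J_{(\ell-c)}) = d$ when $c = 0$, whereas $\depth(R/L_{(h)}) = \depth(R/J_{(\ell-c)}) = d+1$ when $0 < c \leq \ell-1$. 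For $c \in \{\ell, \ell+1\}$ the second family is empty and Lemmas~\ref{lem:  Isomorphism for I with c=l} and~\ref{lem:  Isomorphism for I with c=l+1} directly give $\depth(R/I_{(1)}) = d+1$.

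With all side depths known, one iterates the Depth Lemma, using the form: if $0 \to A \to B \to C \to 0$ is short exact with $\depth(A) \leq \depth(C)$, then $\depth(B) = \depth(A)$. When $c = 0$ we have $I_{(1)} = I$ and all side terms in the second family have depth $d$, so marching from the bottom sequence upwards yields $\depth(R/I) = d$. When $c > 0$, one first marches up the second family (or invokes Lemma~\ref{lem:  Isomorphism for I with c=l} / Lemma~\ref{lem:  Isomorphism for I with c=l+1} when $c \geq \ell$) to conclude $\depth(R/I_{(1)}) = d+1$. Then one marches up the first family: the bottom sequence satisfies $\depth(R/K_{(c,1)}) = d < d+1 = \depth(R/I_{(1)})$, so its middle term $R/C_{(c-1,2)}$ has depth $d$; each subsequent sequence then has both $\depth(R/K_{(j,k)}) = d$ and a right-hand term already known to have depth $d$, so by the equal-depth form of the Depth Lemma every middle term, and ultimately $R/I$, has depth $d$.

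The main obstacle is the parameter bookkeeping hidden in the second paragraph: each shorter spine $n'$ has its own $(b', c')$-decomposition, and one must carefully distinguish the subcases $b' = b-1$ versus $b' = 0$ (equivalently whether $n' > \ell$ or $n' \leq \ell$) and $c' = 0$ versus $c' > 0$ to know which branch of the inductive formula to apply. No individual computation is difficult, but the verification must be exhaustive across the ranges $1 \leq k \leq c-j+1$ and $1 \leq h \leq \ell-c$ to ensure that every side term's depth collapses to exactly $d$ or $d+1$ so that the Depth Lemma can be applied in the form $\depth(A) \leq \depth(C)$ at every step.
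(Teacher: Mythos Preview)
Your proposal is correct and follows essentially the same approach as the paper: induction on $n$, computing the depths of all outer terms in the two families of sequences via Lemmas~\ref{lem:  Isomorphism for K}, \ref{lem:  Isomorphism for I with c=l}, \ref{lem:  Isomorphism for I with c=l+1}, and~\ref{depthLJ} together with the inductive hypothesis, and then marching the Depth Lemma up through~\eqref{eqn:  2nd Family of SES's} and~\eqref{sequences}. Your framing in terms of $d$ and $d+1$ and your explicit invocation of the equal-depth case of the Depth Lemma are cosmetic differences; the paper obtains exactly the same values $\ell(b+1)+c$ for the $L_{(h)}$, $J_{(\ell-c)}$, and $I_{(1)}$ terms and $\ell(b+1)+c-1$ for the $K_{(j,k)}$ terms, and applies the Depth Lemma in the same order.
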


\begin{proof}
We assume $\ell$ is fixed and induct on $n$. If  $n \le \ell$ we have $b=-1$ and $c=n+1$.  By Lemma \ref{lem:  Depth of Trivial Spines} we have $\depth (R/P_{(\ell,1,n)}) = n$ and $\ell(b+1)+c-1=\ell(0)+n+1-1=n$ so the result holds. 

Assume $n \geq \ell+1$. When writing $n=(\ell+1)+b(\ell+2)+c$, notice that for $n\geq 0$, $b = -1$ if and only if $n \leq \ell$. Thus for $n \geq \ell+1$, $b \geq 0$. In the proof that follows, we will be working with $n-t$ for various values of $t$. When $b=0$, this will often result in $n-t \leq \ell$. While this situation can easily be handled using separate caes, allowing $b-1=-1$ creates a more streamlined proof.

Suppose $0 \leq c \leq \ell-1$. Then 
by Lemma~\ref{depthLJ},
$$\depth (R/L_{(h)}) = \depth (R_{n-\ell+h-2}/P_{(\ell,1,n-\ell+h-2)}) + \ell - h + 1,$$
$$\depth (R/J_{(\ell-c)}) = \depth (R_{n-c-1}/P_{(\ell,1,n-c-1)}) + c.$$
Since $1 \leq h \leq \ell-c$ then $0 <c+h \leq \ell$. Now $n-\ell+h-2 = \ell+1 +(b-1)(\ell+2)+c+h$.  By induction, 
$$\depth (R_{n-\ell+h-2}/P_{(\ell,1,n-\ell+h-2)}) = \ell((b-1)+1)+(c+h)-1,$$
so Lemma~\ref{depthLJ} yields 
$$\depth (R/L_{(h)}) = \ell(b)+c+h-1 + \ell - h + 1=\ell(b+1)+c.$$ 
Also by induction, 
$$\depth (R_{n-c-1}/P_{(\ell,1,n-c-1)}) =\ell(b-1+1) + (\ell+1)-1 = \ell(b+1)$$ 
since $n-c-1=(\ell+1)+(b-1)(\ell+2)+\ell+1$,
so $\depth (R/J_{(\ell-c)}) = \ell(b+1) + c$. Now repeated use of the Depth Lemma applied to \eqref{eqn:  2nd Family of SES's} yields $\depth R/I_{(1)} = \ell(b+1)+c$.

Suppose $c=\ell$.  Then by Lemma~\ref{lem:  Isomorphism for I with c=l} we have 
$$\depth (R/I_{(1)}) =  \depth (R_{n-\ell-1}/P_{(\ell,1,n-\ell-1)}) +\ell.$$ 
Then $n-\ell-1=\ell+1+b(\ell+2)+\ell -\ell-1=\ell+1+(b-1)(\ell+2)+\ell+1$. Thus applying the inductive hypothesis with $b'=b-1$ and $c'=\ell+1$ yields 
$$\depth (R_{n-\ell-1}/P_{(\ell,1,n-\ell-1)})=\ell(b-1+1)+(\ell+1)-1,$$ 
so $\depth (R/I_{(1)})=\ell b+\ell+\ell=\ell(b+1)+c$. 

If $c=\ell+1$, then by Lemma~\ref{lem:  Isomorphism for I with c=l+1} we have 
$$\depth (R/I_{(1)}) =\depth(R_{n-\ell-2}/P_{(\ell,1,n-\ell-2)})+\ell +1.$$ 
Then $n-\ell-2=\ell+1+(b-1)(\ell+2)+c$, so by induction, 
$$\depth(R_{n-\ell-2}/P_{(\ell,1,n-\ell-2)})=\ell(b-1+1)+c-1,$$ 
and $\depth (R/I_{(1)}) =\ell(b)+c-1+\ell+1=\ell(b+1)+c$. 

We now have $\depth (R/I_{(1)}) =\ell(b+1)+c$ for all possible values of $c$ . Notice that if $c=0$, we have $P_{(\ell,1,n)}=I_{(1)}$ and $\depth (R/P_{(\ell,1,n)})=\ell(b+1)$ for any $b$, and the result holds. Thus we may now assume $c>0$ for the remainder of the proof.

By Lemma~\ref{lem:  Isomorphism for K}, for all $j,k$
$$\depth (R/K_{(j,k)}) = \depth (R_{n-\ell-k-1}/P_{(\ell,1,n-\ell-k-1)}) + \ell + k - 1.$$ 
Now if $n=(\ell+1) + b(\ell+2)+c$, then $n-\ell-k-1=(\ell+1)+(b-1)(\ell+2) +c-k+1$. Notice that $c-k+1 >0$ since $k \leq c-j+1$. Thus we have 
$$\depth (R_{n-\ell-k-1}/P_{(\ell,1,n-\ell-k-1)}) =\ell(b-1+1)+c-k+1-1=\ell(b)+c-k$$ 
by induction. Then $\depth (R/K_{(j,k)}) =\ell(b)+c-k+\ell+k-1=\ell(b+1)+c-1$. 
Now repeated application of the Depth Lemma to the sequences in \eqref{sequences} yields  $\depth (R/P_{(\ell,1,n)}) =\ell(b+1)+c-1$ when $c>0$. 
\end{proof}

There are some interesting reformulations of the depth  found in Theorem~\ref{Depth of Spines}. They are stated here without proof as the proofs are basic computations and  summation arguments. 

\begin{Corollary}
Theorem~\ref{Depth of Spines} can be reformulated as 

$$\depth(R/P_{(\ell,1,n)})= \left\{
     \begin{array}{lr}
       m\ell, & \text{if }  \ell \leq \frac{n-2m+2}{m} \\
       n-2m+2, & \text{if }  \ell > \frac{n-2m+2}{m}
     \end{array}
   \right.$$
where $m=\lceil \frac{n}{\ell+2}\rceil$, or as $\depth (R/P_{(\ell,1,n)}) = \sum_{i=0}^{\ell-1} \left\lceil \frac{n-i}{\ell+2} \right\rceil$.
\end{Corollary}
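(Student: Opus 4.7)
The plan is to verify that each reformulation agrees with the depth given by Theorem~\ref{Depth of Spines}, which, writing $n = (\ell+1) + b(\ell+2) + c$ with $b \geq -1$ and $0 \leq c \leq \ell+1$, reads $\ell(b+1)$ when $c = 0$ and $\ell(b+1) + c - 1$ when $c > 0$. The key preparatory step is to translate $(b,c)$ into the parameters $m = \lceil n/(\ell+2) \rceil$ and the ``defect'' $r := m(\ell+2) - n \in \{0, 1, \ldots, \ell+1\}$: a direct case check against $n = (\ell+1) + b(\ell+2) + c$ gives $m = b+1$ and $r = 1-c$ when $c \in \{0,1\}$, whereas $m = b+2$ and $r = \ell+3-c$ when $c \in \{2, 3, \ldots, \ell+1\}$.

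For the piecewise reformulation I would observe that the threshold $\ell \leq (n-2m+2)/m$ rearranges to $n \geq m(\ell+2) - 2$, i.e.\ $r \leq 2$, which under the above dictionary is precisely $c \in \{0, 1, \ell+1\}$. In the cases $c \in \{0,1\}$ one has $m\ell = (b+1)\ell$, and in the case $c = \ell+1$ one has $m\ell = (b+2)\ell = (b+1)\ell + \ell$, each matching Theorem~\ref{Depth of Spines}. For the remaining values $c \in \{2, 3, \ldots, \ell\}$, where $m = b+2$, direct expansion gives $n - 2m + 2 = (b+1)(\ell+2) + c - 1 - 2(b+2) + 2 = \ell(b+1) + c - 1$, again matching.

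For the summation formula I would use $n - i = m(\ell+2) - (r+i)$ together with the fact that $\lceil (n-i)/(\ell+2) \rceil$ equals $m$ when $r + i \leq \ell+1$ and equals $m-1$ when $\ell + 2 \leq r + i \leq 2\ell+2$. Since $i$ ranges over $\{0,1,\ldots, \ell-1\}$ and $r \leq \ell+1$, this second regime contributes exactly $\max(0,r-2)$ indices, so the sum equals $\ell m - \max(0, r-2)$. This reduces to $m\ell$ when $r \leq 2$ and to $\ell m - (r-2) = n - 2m + 2$ when $r \geq 3$, matching the piecewise formula established above.

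The only subtle point is the one-step jump $m = b+1 \mapsto b+2$ as $c$ crosses $2$, which reshuffles three consecutive values of $c$ across the boundary $r = 2$ of the piecewise formula; once this correspondence is tracked carefully, the remaining verifications are the short arithmetic checks indicated above, which is why the authors characterize these proofs as basic computations and summation arguments.
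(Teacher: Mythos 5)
Your proposal is correct, and the paper itself gives no proof of this corollary, stating only that the verifications are ``basic computations and summation arguments''; your argument supplies exactly those computations. The dictionary $m=b+1$, $r=1-c$ for $c\in\{0,1\}$ versus $m=b+2$, $r=\ell+3-c$ for $2\le c\le \ell+1$, the identification of the threshold with $r\le 2$, and the count $\max(0,r-2)$ of indices where the ceiling drops all check out, so the proof is complete and consistent with the authors' intent.
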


Notice that when $\ell$ is large relative to $n$, the depth of $R/P_{(\ell,1,n)}$ is large. If $\ell>n$, then the depth is $n$, as was noted in Lemma~\ref{lem: Depth of Trivial Spines}. However it is intersesting to note that as long as $\ell$ is roughly half of $n$ or larger, the depth remains quite large.

\begin{Corollary}\label{n-2}
If $\ell \geq \frac{n-2}{2}$ , then $\depth (R/P_{(\ell,1,n)}) = n-2$ for $\ell\neq n-1$ and for $\ell=n-1$, $\depth (R/P_{(\ell,1,n)}) =n-1$.
\end{Corollary}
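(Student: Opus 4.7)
The plan is to extract this as a direct computation from Theorem~\ref{Depth of Spines}. Recall that $n$ is expressed uniquely as $n=(\ell+1)+b(\ell+2)+c$ with $0\le c<\ell+2$, equivalently $n-\ell-1=b(\ell+2)+c$. The hypothesis $\ell\ge (n-2)/2$ is equivalent to $n\le 2\ell+2$, i.e. $n-\ell-1\le \ell+1<\ell+2$. Since the corollary concerns the nontrivial regime where $P_{(\ell,1,n)}\neq 0$, I assume $n\ge \ell+1$ (otherwise Lemma~\ref{lem: Depth of Trivial Spines} applies and the statement is vacuous). Under these assumptions $b=0$ and $c=n-\ell-1$.

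First I would treat the boundary case $\ell=n-1$, where $n-\ell-1=0$, so $c=0$ and $b=0$. By Theorem~\ref{Depth of Spines}, $\depth(R/P_{(\ell,1,n)})=\ell(b+1)=\ell=n-1$, which matches the stated value. (Alternatively, one can observe directly that $P_{(n-1,1,n)}=(x_1x_2\cdots x_n)$ is a principal ideal in $R$, so $R/P_{(n-1,1,n)}$ is a complete intersection of depth $n-1$, which provides a sanity check.)

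Next, for $\ell+2\le n\le 2\ell+2$ (i.e. $\ell\ne n-1$ in the allowed range), one has $c=n-\ell-1\ge 1$ and $b=0$. Plugging into the second case of Theorem~\ref{Depth of Spines} yields
\[
\depth(R/P_{(\ell,1,n)})=\ell(b+1)+c-1=\ell+(n-\ell-1)-1=n-2,
\]
as desired.

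The main obstacle is essentially bookkeeping: one must verify that the inequality $\ell\ge (n-2)/2$ forces the quotient $b$ to vanish and pin down $c$ exactly, then keep the boundary case $c=0$ (corresponding to $\ell=n-1$) separate from $c\ge 1$ so as to apply the correct branch of Theorem~\ref{Depth of Spines}. No further ideas are needed beyond this case analysis.
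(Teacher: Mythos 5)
Your proposal is correct and matches the paper's own argument: both deduce $b=0$ (and $c=n-\ell-1$) from the hypothesis $\ell\ge\frac{n-2}{2}$ and then read off the two branches of Theorem~\ref{Depth of Spines}, with $c=0$ giving the $\ell=n-1$ case and $c>0$ giving $n-2$. Your extra remark restricting to $n\ge\ell+1$ is a reasonable (implicit in the paper) clarification, since for $n\le\ell$ the ideal is zero and the depth is $n$.
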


\begin{proof}
Since $\ell \geq \frac{n-2}{2}$, then $b=0$, where $n=(\ell+1)+b(\ell+2)+c$ and $c \leq \ell+1$. By Theorem~\ref{Depth of Spines}, if $c=0$, $\depth (R/P_{(\ell,1,n)})=\ell(b+1)=\ell=n-1$ and if $c >0$, then 
$$\depth (R/P_{(\ell,1,n)})=\ell(b+1)+c-1=\ell+c-1=n-2.$$
\end{proof}

Corollary~\ref{n-2} is particularly interesting when compared to Section~\ref{CMPathIdeals}.  Let $m=\lfloor \frac{n}{\ell+1}  \rfloor$. The set of vertices $M=\{x_{\ell+1},x_{2\ell+2},...,x_{m\ell+m} \}$ forms a minimal vertex cover of minimal cardinality of $P_{(\ell,1,n)}$, so $\height(I)=\lfloor    \frac{n}{\ell+1}   \rfloor$, or  $\dim (R/I) = n - \lfloor \frac{n}{\ell+1} \rfloor$. Thus if $n=2\ell+2$, then by Corollary~\ref{n-2}, $R/ P_{(\ell,1,n)}$ is Cohen-Macaulay. It is also easy to see that when $n=2\ell+2$,  $P_{(\ell,1,n)}$ is the suspension of length $\ell$ of a graph that consists of a single edge connecting two vertices ($x_{n/2}, x_{n/2+1})$. For $n=\ell+1$, $\depth (R/P_{(\ell,1,n)}) = \dim (R/P_{(\ell,1,n)}) = n-1$, which again shows that $R/P_{(\ell,1,n)}$ is Cohen-Macaulay. In this situation, $P_{(\ell,1,n)}$ is the suspension of length $\ell$ of a graph that consists of a single isolated vertex ($x_n$).  For $\ell+1 < n < 2\ell+2$, $\depth (R/P_{(\ell,1,n)}) = n-2$ and $\dim (R/P_{(\ell,1,n)}) = n-1$. 

As remarked before, these results together with the Auslander-Buchsbaum formula, recover the projective dimension found in  \cite[Theorem 4.1]{HeVanTuyl} and in \cite[Corollary 5.1]{BouchatHaO'Keefe}. However, this method of proof has the advantage of also yielding information about the Stanley depth. 
There are three key factors that allow us to extend the depth result to a lower bound on the Stanley depth, or s-depth for brevity. The first two are well known basic facts. If $I$ is a monomial ideal of a polynomial ring $R$ and $y$ is an indeterminate, then 
\begin{equation}\label{s-depth variable}
\sdepth (R[y]/IR[y])=\sdepth (R/I)+1,
\end{equation} 
and $\sdepth R=n$ when $R$ is a polynomial ring in $n$ variables. The third result we will need is that s-depth satisfies a partial version of the Depth Lemma. In particular, it was shown in \cite[Lemma 2.2]{Rauf2} that if
$$0 \rightarrow A \rightarrow B \rightarrow C \rightarrow 0$$
is a short exact sequence of finitely generated $R$ modules then 
$$\sdepth (B) \geq \min \{ \sdepth (A), \sdepth (C)\}.$$
Now by carefully examining the proof of Theorem~\ref{Depth of Spines}, we are able to extend the result to a lower bound on the s-depth of the path ideal of a spine. Note that the explicit calculations closely follow those of Theorem~\ref{Depth of Spines} and so details have been condensed in the proof. 

\begin{Theorem}\label{StanleyDepths}
Let $S_n$ be a spine on $n$ vertices.  Then 
$$\sdepth (R/P_{\ell}(S_n))=\sdepth (R/P_{(\ell,1,n)}) \geq \left\{ \begin{array}{ll}
\ell(b+1) & \text{ if } c=0 \\
\ell(b+1)+c-1 & \text{ if } c>0
\end{array}
\right. .$$
\end{Theorem}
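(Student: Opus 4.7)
The plan is to mirror the proof of Theorem~\ref{Depth of Spines} almost verbatim: induct on $n$ for fixed $\ell$, reuse the same two families of short exact sequences \eqref{sequences} and \eqref{eqn:  2nd Family of SES's}, and replace every invocation of the Depth Lemma with the s-depth inequality of \cite[Lemma 2.2]{Rauf2}, which asserts that for any short exact sequence $0 \to A \to B \to C \to 0$ of finitely generated modules, $\sdepth(B) \geq \min\{\sdepth(A), \sdepth(C)\}$. The base case $n \leq \ell$ is immediate, since then $P_{(\ell,1,n)}=(0)$ and $\sdepth(R)=n=\ell(0)+(n+1)-1$.

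Next I would upgrade the isomorphism lemmas. Each of Lemma~\ref{lem:  Isomorphism for K}, Lemma~\ref{lem:  Isomorphism for I with c=l}, Lemma~\ref{lem:  Isomorphism for I with c=l+1}, and Lemma~\ref{depthLJ} comes from the observation that quotienting by the relevant $a_{(j,k)}$ or $b_{(h)}$ produces an ideal of the form $(P_{(\ell,1,m)}, x_{i_1}, \ldots, x_{i_r})$, which after a relabeling is isomorphic to $P_{(\ell,1,m)}R_m$ with the remaining variables adjoined freely. Applying the s-depth identity $\sdepth(R[y]/IR[y])=\sdepth(R/I)+1$ from \eqref{s-depth variable} iteratively then gives the corresponding \emph{equalities} with $\depth$ replaced by $\sdepth$ on both sides, for example
$$\sdepth(R/K_{(j,k)})=\sdepth(R_{n-\ell-k-1}/P_{(\ell,1,n-\ell-k-1)})+\ell+k-1,$$
together with the analogous identities for $R/L_{(h)}$, $R/J_{(\ell-c)}$, and (in the two boundary cases $c=\ell$ and $c=\ell+1$) for $R/I_{(1)}$. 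Feeding the inductive hypothesis into these identities yields precisely the same numerical lower bounds that were obtained as depth values in the proof of Theorem~\ref{Depth of Spines}: every $\sdepth(R/K_{(j,k)})$ is at least $\ell(b+1)+c-1$, while $\sdepth(R/L_{(h)})$, $\sdepth(R/J_{(\ell-c)})$, and (in the boundary cases) $\sdepth(R/I_{(1)})$ are each at least $\ell(b+1)+c$. Iterating the s-depth inequality backward through \eqref{eqn:  2nd Family of SES's} gives $\sdepth(R/I_{(1)}) \geq \ell(b+1)+c$ for $0 \leq c \leq \ell-1$, and then iterating backward through \eqref{sequences} yields the stated lower bound on $\sdepth(R/P_{(\ell,1,n)})$, split into the cases $c=0$ and $c>0$ exactly as before.

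The expected obstacle is only bookkeeping. In contrast to the depth proof, where the strict inequality $\depth(R/K_{(j,k)}) < \depth(R/I_{(1)})$ was essential for the Depth Lemma to force an equality, here the s-depth inequality only returns the minimum, so at each sequence we need merely verify that \emph{both} flanking terms carry an s-depth bound at least $\ell(b+1)+c-1$ (respectively $\ell(b+1)+c$ in the auxiliary family \eqref{eqn:  2nd Family of SES's}). This is precisely the arithmetic already performed in Theorem~\ref{Depth of Spines}, repeated across the same four cases $c=0$, $1\leq c \leq \ell-1$, $c=\ell$, $c=\ell+1$; no new conceptual ingredient is required beyond \eqref{s-depth variable} and \cite[Lemma 2.2]{Rauf2}.
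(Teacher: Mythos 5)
Your proposal is correct and follows essentially the same route as the paper: induct on $n$, reuse the sequences $a_{(j,k)}$, $b_{(h)}$ and the ideals $K_{(j,k)}$, $C_{(j,k)}$, $L_{(h)}$, $J_{(h)}$, $I_{(1)}$, upgrade the isomorphism lemmas via \eqref{s-depth variable}, and replace the Depth Lemma by \cite[Lemma 2.2]{Rauf2}. Your observation that one now needs lower bounds on \emph{both} flanking terms of each sequence (rather than the strict inequality that forces equality of depths) is exactly the point the paper's proof relies on, so nothing is missing.
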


\begin{proof}
We assume $\ell$ is fixed and induct on $n$. Write $n=(\ell+1)+b(\ell+2)+c$. If  $n \le \ell$, $\sdepth (R/P_{(\ell,1,n)}) =\sdepth (R)= n$ and $\ell(b+1)+c-1=\ell(0)+n+1-1=n$ and the result holds. 
 Define the sequences $a_{(j,k)}$ and $b_{(h)}$ and the related ideals $K_{(j,k)}$, $C_{(j,k)}$, $L_{(h)}$, $J_{(h)}$ and $I_{(1)}$ as before. By Proposition~\ref{prp:  Explicit Form of K}, 
$$\sdepth (R/K_{(j,k)})=\sdepth(R/P_{(\ell,1,n-\ell-k-1)})+\ell+k-1,$$ 
and by induction 
$$\sdepth (R/P_{(\ell,1,n-\ell-k-1)}) \geq \ell(b-1+1)+c-k+1-1=\ell(b)+c-k,$$
 so 
$\sdepth (R/K_{(j,k)}) \geq \ell(b)+c-k+\ell+k-1=\ell(b+1)+c-1$. 

If $c=\ell$ or $c=\ell+1$, then as in Lemma~\ref{lem:  Isomorphism for I with c=l} or Lemma~\ref{lem:  Isomorphism for I with c=l+1} with  \cite[Lemma 2.2]{morey} replaced by~\eqref{s-depth variable}, 
$$\sdepth (R/I_{(1)}) =  \sdepth (R_{n-\ell-1}/P_{(\ell,1,n-\ell-1)}) + \ell$$ 
when $c=\ell$, and when $c=\ell+1$, 
$$\sdepth (R/I_{(1)}) =\sdepth(R_{n-\ell-2}/P_{(\ell,1,n-\ell-2)})+\ell +1.$$ 
In either case, applying the inductive hypothesis as in Theorem~\ref{Depth of Spines} yields  
$$\sdepth (R/I_{(1)}) \geq \ell(b+1)+c.$$ 

Suppose $0 \leq c \leq \ell-1$. Then as in Lemma~\ref{depthLJ}  with  \cite[Lemma 2.2]{morey} replaced by~\eqref{s-depth variable}, 
$$\sdepth (R/L_{(h)}) = \sdepth (R_{n-\ell+h-2}/P_{(\ell,1,n-\ell+h-2)}) +\ell - h + 1,$$ 
 $$\sdepth (R/J_{(\ell-c)}) = \sdepth (R_{n-c-1}/P_{(\ell,1,n-c-1)}) + c.$$
As in Theorem~\ref{Depth of Spines}, applying the inductive hypothesis yields 
$$\sdepth (R_{n-\ell+h-2}/P_{(\ell,1,n-\ell+h-2)}) \geq \ell((b-1)+1)+(c+h)-1,$$ 
so  $\sdepth (R/L_{(h)}) \geq \ell(b+1)+c$. Also by induction 
$$\sdepth (R_{n-c-1}/P_{(\ell,1,n-c-1)}) \geq \ell(b-1+1) + (\ell+1)-1 =\ell(b+1),$$ 
so $\sdepth (R/J_{(\ell-c)}) \geq \ell(b+1) + c$. Now repeated use of  \cite[Lemma 2.2]{Rauf2} applied to \eqref{eqn:  2nd Family of SES's} yields 
$$\sdepth R/I_{(1)} \geq \ell(b+1)+c.$$
Notice that if $c=0$, we have $P_{(\ell,1,n)}=I_{(1)}$ and $\sdepth (R/P_{(\ell,1,n)})\geq \ell(b+1)$ for any $b$, and the result holds. For $c>0$ repeated application of \cite[Lemma 2.2]{Rauf2} to the sequences in \eqref{sequences} yields  
$\sdepth (R/P_{(\ell,1,n)}) \geq \ell(b+1)+c-1.$ 
\end{proof}

A monomial ideal $I$ is a Stanley ideal if the Stanley conjecture holds for $I$. That is, if $\sdepth (R/I) \geq \depth (R/I)$. Due to the general difficulty of computing the Stanley depth, very few classes of Stanley ideals are know. It is interesting to note that Theorem~\ref{StanleyDepths} provides a new class of Stanley ideals.

\begin{Corollary}\label{StanleyIdeals}
Let $S$ be a spine of $n$ vertices.  Then $P_{\ell}(S)$ is a Stanley ideal.
\end{Corollary}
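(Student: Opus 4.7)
The plan is to deduce this corollary immediately by combining the two main results of the section, namely Theorem~\ref{Depth of Spines} and Theorem~\ref{StanleyDepths}. Indeed, the definition of a Stanley ideal requires only the single inequality $\sdepth(R/I)\geq \depth(R/I)$, and these two theorems were designed so that the Stanley depth bound of Theorem~\ref{StanleyDepths} exactly matches the depth computation of Theorem~\ref{Depth of Spines} in each of the two cases ($c=0$ and $c>0$) determined by the division algorithm $n-\ell-1=b(\ell+2)+c$.

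Concretely, I would write $n=(\ell+1)+b(\ell+2)+c$ with $0\leq c<\ell+2$ and then split into the two cases. If $c=0$, Theorem~\ref{Depth of Spines} gives $\depth(R/P_{(\ell,1,n)})=\ell(b+1)$ and Theorem~\ref{StanleyDepths} gives $\sdepth(R/P_{(\ell,1,n)})\geq \ell(b+1)$, so the Stanley inequality holds with equality (or better). If $c>0$, the two results yield $\depth(R/P_{(\ell,1,n)})=\ell(b+1)+c-1$ and $\sdepth(R/P_{(\ell,1,n)})\geq \ell(b+1)+c-1$, again yielding the required inequality. Since $P_{\ell}(S)=P_{(\ell,1,n)}$ by the definition of the notation in Section~\ref{Depths and Stanley}, this completes the proof.

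There is no real obstacle here: the entire inductive framework (the two families of short exact sequences built from the $a_{(j,k)}$ and $b_{(h)}$, together with the use of \cite[Lemma 2.2]{Rauf2} in place of the Depth Lemma) was already set up in Theorem~\ref{StanleyDepths} precisely so that the Stanley depth bound would parallel the depth calculation. Thus the corollary is a one-line consequence, and the proof need only cite the two theorems and compare the formulas.
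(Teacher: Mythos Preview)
Your proposal is correct and matches the paper's own proof, which simply states that the result follows directly from Theorems~\ref{Depth of Spines} and~\ref{StanleyDepths}. The case split you wrote out is fine but unnecessary, since the two theorems give identical right-hand sides in each case and the comparison is immediate.
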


\begin{proof}
This follows directly from Theorems~\ref{Depth of Spines} and~\ref{StanleyDepths}.
\end{proof}

\section{ACKNOWLEDGMENTS}
We gratefully acknowledge the computer algebra system Macaulay 2 \cite{M2} which was invaluable in our work on this paper. The authors of this paper were supported by NSF grant (DMS 1005206) during the intital phase of research. We thank NSF and Texas State University for their support. We wish to thank the other students and faculty mentors working on this grant for helpful suggestions and ideas during various group discussions.

\bibliographystyle{plain}

\end{document}